%%%%%%%%%%%%%%%%%%%%%%%%%%%%%%%%%%%%%%%%%%%%%%%%%%%%%%%%%%%%%%%%%%%%%%%%%%%%%%%%
\documentclass[11pt]{amsart}
\usepackage[utf8]{inputenc}
\usepackage[centertags]{ amsmath}
\usepackage{amssymb,amsthm}
\usepackage{amsfonts}
\usepackage[colorlinks=true,linkcolor=blue,citecolor=blue]{hyperref}
\usepackage{url}
\usepackage{mdframed}
\usepackage{multirow}
\usepackage{csquotes}

\usepackage{enumitem}

\usepackage[colorinlistoftodos,prependcaption,textsize=tiny]{todonotes}

%allows use of mathematica code
\usepackage{listings}
\lstloadlanguages{Mathematica}
\usepackage{verbatim}
\usepackage{xcolor,graphicx}

%%%%%%%%%%%%%%%%%%%%%%%%%%%%%%%%%%%%%%%%%%%%%%%%%%%%%%%%%%%%%%%%%%%%%%%%%%%%%%%%

\def\R{\hbox{$\mathbb R$}}

\def\ent{\hbox{{\rm ent}}}
\def\d{\hbox{{\rm d}$s$}}
\def\e{\hbox{{\rm e}}}
\def\p{\hbox{{\rm p}}}

\def\argmin{\hbox{{\rm argmin}}}

\newcommand{\W}{\mathcal W}

\newtheorem{theorem}{Theorem}[section]
\newtheorem{lemma}[theorem]{Lemma}
\newtheorem{proposition}[theorem]{Proposition}

\theoremstyle{definition}
\newtheorem{definition}[theorem]{Definition}

\newtheorem{example}[theorem]{Example}
\newtheorem{remark}[theorem]{Remark}

%%%%%%%%%%%%%%%%%%%%%%%%%%%%%%%%%%%%%%%%%%%%%%%%%%%%%%%%%%%%%%%%%%%%%%%%%%%%%%%%
%opening
\title[Proximal Averages for Entropy Minimization]{Proximal Averages for Minimization of Entropy Functionals}

\author[H.H. Bauschke]{Heinz H. Bauschke}

\address[H.H. Bauschke]{Mathematics, University of British Columbia Okanagan, Kelowna, B.C., V1V 1V7, Canada}
\email{{\tt heinz.bauschke@ubc.ca}}

\author[S.B. Lindstrom]{Scott B. Lindstrom}
\address[S.B. Lindstrom]{CARMA, University of Newcastle, Callaghan, Australia, 2308}
\email{\tt scott.lindstrom@uon.edu.au}

\keywords{Lambert W function, proximal average, convex optimization, special functions, convex conjugate, Fenchel-Moreau-Rockafellar-conjugate, Fenchel duality, entropy optimization, subdifferential, symbolic convex analysis tools}

%%%%%%%%%%%%%%%%%%%%%%%%%%%%%%%%%%%%%%%%%%%%%%%%%%%%%%%%%%%%%%%%%%%%%%%%%%%%%%%%
\begin{document}

\begin{abstract}
In their 2016 article ``Meetings with Lambert $\W$ and Other Special Functions in Optimization and Analysis'',
 Borwein and Lindstrom considered the minimization of an entropy functional utilizing the weighted average of the negative Boltzmann--Shannon entropy $x\mapsto x\log x -x$ and the energy $x \mapsto x^2/2$; the solution employed the Fenchel-Moreau conjugate of the sum. However, in place of a traditional arithmetic average of two convex functions, it is also, perhaps even more, natural to use their \emph{proximal} average. We explain the advantages and illustrate them by computing the analogous proximal averages for the negative entropy and energy. We use the proximal averages to solve entropy functional minimization problems similar to those considered by Borwein and Lindstrom, illustrating the benefits of using a true homotopy. Through experimentation, we discover computational barriers to obtaining solutions when computing with proximal averages, and we demonstrate a method which appears to remedy them.
\end{abstract}

\maketitle

%%%%%%%%%%%%%%%%%%%%%%%%%%%%%%%%%%%%%%%%%%%%%%%%%%%%%%%%%%%%%%%%%%%%%%%%%%%%%%%%
\section{Introduction}\label{sec:intro}

Computer assisted discovery has changed the way in which research is conducted, both in optimization and elsewhere. As noted by Borwein and Lindstrom \cite{BL2016} in 2016:
\begin{displayquote}[]
In the current mathematical world, it matters
less what you know about a given function than whether your computer
package of choice (say \emph{Maple}, \emph{Mathematica} or \emph{SAGE}) or online source, say
Wikipedia \cite{wikiW} does.
\end{displayquote}
They considered, in particular, that many occurrences of the Lambert $\W$ function in convex analysis may be naturally discovered with the use of the \emph{Symbolic Convex Analysis Tools (SCAT)} package for \emph{Maple} \cite{SCAT,SCATarticle}, notwithstanding one's possible naivety of special functions. The \emph{SCAT} package for \emph{Maple} was created by Borwein and Chris Hamilton;
it grew out of Bauschke and von Mohrenschildt's \emph{Maple} package \emph{fenchel}, which is described in their 2006 article \cite{BvM}. \emph{SCAT} continues to be developed by D.R. Luke and others. F.\ Lauster, D.R.\ Luke, and M.K.\ Tam have recently illuminated symbolic computation with monotone operators \cite{LLT}.

We will continue the work of Borwein and Lindstrom, \cite{BL2016}, by considering \emph{proximal} averages where they originally considered \emph{weighted} averages: for the minimization of entropy functionals. In human-machine collaboration with our computer algebra system (CAS) of choice, \emph{Maple}, we discover closed forms of proximal averages for the Boltzmann--Shannon entropy and energy for specific parameters before conjecturing and eventually proving their general form with the computer assistance. Armed with closed forms, we will consider how the problem of minimizing an entropy functional varies when the weighted average is replaced with a true homotopy.

The structure of this paper is as follows. In Subsection~\ref{ss:Wprelims}, we recall the properties of the Lambert $\W$ function which will prove instrumental in our analysis, and in Subsection~\ref{ss:convexanalysisprelims} we recall preliminaries on convex analysis. In Section~\ref{s:pavs}, we recall the basic properties of the proximal average. In Section~\ref{s:pavswithW} we consider proximal averages which employ $\W$, first the energy and Boltzmann--Shannon entropy in Subsection~\ref{ss:flambda}, and then the energy with the exponential in Subsection~\ref{ss:flambdastar}; the two are related, importantly, through duality. In Section~\ref{s:entropyminimization}, we introduce the problem of minimizing an entropy functional subject to linear constraints, and in Subsection~\ref{ss:entropyexamples} we provide examples. We conclude in Section~\ref{s:conclusion}.

\subsection{Lambert $\W$ preliminaries}\label{ss:Wprelims}

Of particular interest to us is the Lambert $\W$ function, which we take to be the real analytic inverse of $x \mapsto xe^x$. The real inverse is two-valued, and, for the sake of our exposition, we consider $\W$ to always refer to the principal branch, shown in Figure~\ref{fig:Wprincipal}. We will make use of the following elementary identities.

\begin{proposition}
	For any $y \in \mathbb{R}$, the following identities hold:
	\begin{enumerate}[label=\emph{(\roman*)}]
		\item\label{W1} $\W(y)e^{\W(y)} = y$;
		\item\label{W2} $e^{\W(y)}=\frac{y}{\W(y)}$;
		\item\label{W3} $\W(y) = \log \left(\frac{y}{\W(y)}  \right)$;
		\item\label{W4} $\log \left(\W(y) \right) = \log(y)-\W(y)$.
		\item\label{W8} $\log \left(\W(e^y) \right) = y-\W(e^y)$.
	\end{enumerate}
\end{proposition}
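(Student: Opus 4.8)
The plan is to prove the five identities in sequence, deriving each from the defining property and the ones before it, so the only genuinely substantive step is the first. I would start from the definition: $\W$ is the real analytic inverse of $x \mapsto x e^x$, so for $y$ in the relevant domain we have the identity $\W(y) e^{\W(y)} = y$ by the very meaning of ``inverse.'' (Strictly, this requires $y$ to lie in the range of $x \mapsto xe^x$ on the principal branch, i.e. $y \ge -1/e$; I would either restrict the statement accordingly or note that the paper's convention handles this, since as written ``for any $y \in \mathbb{R}$'' is slightly too generous.) This establishes \ref{W1}.

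From there the rest is pure algebra. For \ref{W2}, divide both sides of \ref{W1} by $\W(y)$, which is legitimate wherever $\W(y) \neq 0$ (i.e. $y \neq 0$); the case $y = 0$ can be checked directly since $\W(0) = 0$ makes \ref{W2} read $1 = 0/0$, so again a domain caveat is in order, or one simply states \ref{W2} for $y \neq 0$. For \ref{W3}, take logarithms of \ref{W2}: $\W(y) = \log\!\bigl(e^{\W(y)}\bigr) = \log\!\bigl(y/\W(y)\bigr)$, valid when $y/\W(y) > 0$, which holds throughout the principal branch's domain. For \ref{W4}, expand the logarithm of the quotient in \ref{W3}: $\W(y) = \log(y) - \log(\W(y))$, then rearrange to $\log(\W(y)) = \log(y) - \W(y)$. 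Finally \ref{W8} is just \ref{W4} with $y$ replaced by $e^y$, using $\log(e^y) = y$: this is the form most directly useful later because $e^y > 0$ automatically sits in the good domain, sidestepping the positivity worries.

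I would present this as a short chain of substitutions with a single sentence per identity, emphasizing that \ref{W1} is the definitional input and \ref{W2}--\ref{W8} are formal consequences. The main obstacle — really the only place where care is needed — is being honest about domains: the bare assertion ``for any $y \in \mathbb{R}$'' fails at $y = 0$ (for \ref{W2} and \ref{W3} as literally written, involving division by $\W(y)$) and for $y < -1/e$ (where the principal real branch is undefined). In a clean write-up I would either (a) silently follow the paper's evident convention that these are understood on the natural domain $[-1/e, \infty)$ with the $y=0$ quotient forms read via continuous extension, or (b) add a parenthetical restricting \ref{W2} and \ref{W3} to $y \neq 0$. Everything else is immediate, so the proof is essentially four lines plus a remark on domains.
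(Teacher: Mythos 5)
Your proposal follows exactly the paper's route: \ref{W1} from the inverse-function definition, \ref{W2} by dividing by $\W(y)$, \ref{W3} by taking logarithms, \ref{W4} by expanding the quotient, and \ref{W8} by substituting $e^y$ for $y$. Your added remarks on the domain caveats (the restriction to $y\ge -1/e$ and the degenerate case $y=0$) are a small improvement in rigor over the paper, which states the identities ``for any $y\in\mathbb{R}$'' without comment, but the argument itself is the same.
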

\begin{proof}
	\ref{W1}: This is true from the fact that $\W$ is the inverse of $x\mapsto xe^x$.

	\ref{W2}: Divide both sides of \ref{W1} by $\W(y)$.

	\ref{W3}: Take the $\log$ of both sides of \ref{W2}.

	\ref{W4}: Since $\log \left(\frac{y}{\W(y)} \right) = \log(y)-\log(\W(y))$, this follows from \ref{W3}.

	\ref{W8}: Apply \ref{W4}, substituting $e^y$ for $y$.
\end{proof}

An excellent overview of the methods used for symbolic differentiation and anti-differentiation --- and their history --- is given
by R.M.\ Corless, G.H.\ Gonnet, D.E.G.\ Hare, D.J.\ Jeffrey,
and D.E.\ Knuth \cite{CorlessKnuth}. We have, in particular, the following characterization of the derivatives and antiderivative.

\begin{proposition}\label{prop:Wderivative}
	The derivative of $\W$ is given by
	\begin{align*}
	\W'(x) &= \frac{1}{(1+\W(x))\exp(\W(x))}\\
	&= \frac{\W(x)}{x(1+\W(x))}, \quad {\rm if}\; x\neq 0.
	\end{align*}
	Moreover, the $n$th derivative of $\W$ may be characterized as
	\begin{equation*}
	\frac{d^n\W(x)}{dx^n} = \frac{e^{-n\W(x)}p_n(\W(x))}{(1+\W(x))^{2n-1}}\quad {\rm for}\;\;n\geq 1.
	\end{equation*}
	where $p_n(w)$ are polynomials which satisfy the recurrence relation given by
	\begin{equation*}
	p_{n+1}(w) = -\left(nw+3n-1\right)p_n(w)+(1+w)p_n'(w), \quad \text{for}\; n\geq 1.
	\end{equation*}
	For details, see, for example, \cite[Section~3]{CorlessKnuth}.
\end{proposition}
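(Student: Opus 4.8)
The plan is to derive the first-derivative formula by implicit differentiation of the defining identity, and then to obtain the closed form for $\W^{(n)}$ together with the recurrence for the polynomials $p_n$ in a single induction on $n$.

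First I would differentiate identity \ref{W1}, $\W(y)e^{\W(y)} = y$, which is valid on the interior of the domain of the principal branch, where $\W$ is real analytic. The product and chain rules give $\W'(y)\,e^{\W(y)}\bigl(1+\W(y)\bigr) = 1$, and since $1+\W(y) \neq 0$ there, we may solve to obtain $\W'(y) = \frac{1}{(1+\W(y))e^{\W(y)}}$. Substituting identity \ref{W2}, $e^{\W(y)} = y/\W(y)$, then yields the equivalent expression $\W'(y) = \frac{\W(y)}{y(1+\W(y))}$ for $y \neq 0$.

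For the higher derivatives I would induct on $n \geq 1$. The base case is the formula just established, rewritten as $\W'(x) = \frac{e^{-\W(x)}p_1(\W(x))}{(1+\W(x))^{2\cdot 1 - 1}}$ with $p_1 \equiv 1$. For the inductive step, set $w := \W(x)$ and note that $\frac{dw}{dx} = \frac{e^{-w}}{1+w}$, so $\frac{d}{dx} = \frac{e^{-w}}{1+w}\,\frac{d}{dw}$ on expressions that depend on $x$ only through $w$. Assuming $\W^{(n)}(x) = e^{-nw}p_n(w)(1+w)^{-(2n-1)}$, I would differentiate the right-hand side by the product rule in $w$, factor out $e^{-nw}(1+w)^{-2n}$, and write the remaining bracket as $(1+w)p_n'(w) - \bigl(n(1+w)+(2n-1)\bigr)p_n(w)$, which simplifies to $(1+w)p_n'(w) - (nw+3n-1)p_n(w)$. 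Multiplying through by the outer factor $\frac{e^{-w}}{1+w}$ converts $e^{-nw}(1+w)^{-2n}$ into $e^{-(n+1)w}(1+w)^{-(2(n+1)-1)}$, so $\W^{(n+1)}$ has the claimed shape precisely when $p_{n+1}(w) := -(nw+3n-1)p_n(w) + (1+w)p_n'(w)$, which is the asserted recurrence. Since this operation maps polynomials to polynomials and $p_1$ is a polynomial, every $p_n$ is a polynomial, closing the induction.

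The argument is elementary, and I do not anticipate a genuine obstacle: the only thing requiring care is the bookkeeping of the factor $e^{-nw}$ and of the powers of $1+w$ as they pass through the product and chain rules, and the check that the coefficient $-\bigl(n(1+w)+(2n-1)\bigr)$ collapses to the $-(nw+3n-1)$ appearing in the recurrence. This is the standard derivation recorded, for example, in \cite[Section~3]{CorlessKnuth}, and a computer algebra system confirms it immediately.
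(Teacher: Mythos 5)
Your derivation is correct: the implicit differentiation of $\W(y)e^{\W(y)}=y$ gives the first-derivative formulas, and your induction correctly tracks the factors $e^{-nw}$ and $(1+w)^{-(2n-1)}$ so that the bracket $-\bigl(n(1+w)+(2n-1)\bigr)p_n(w)+(1+w)p_n'(w)$ collapses to the stated recurrence. The paper itself gives no proof of this proposition --- it simply defers to \cite[Section~3]{CorlessKnuth} --- and what you have written is exactly the standard argument recorded there, so there is nothing to compare beyond noting that you have supplied the omitted details.
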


\begin{proposition}
The antiderivative of $\W$ may be characterized as
\begin{align*}
\int \W(x) dx &= \left(\W(x)^2-\W(x)+1\right)e^{\W(x)}+C\\
&=x\left(\W(x)-1+1/\W(x) \right)+C.
\end{align*}
For details, see, for example, \cite[Section~3]{CorlessKnuth}.
\end{proposition}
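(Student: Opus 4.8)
The plan is to establish the first identity by direct differentiation, invoking the formula for $\W'$ recorded in Proposition~\ref{prop:Wderivative}, and then to deduce the second identity from the first by a purely algebraic substitution based on the identities of the preceding proposition; in effect the proof only needs to verify the stated antiderivative, not rediscover it.

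First I would abbreviate $w := \W(x)$ and recall that $\W'(x) = \frac{1}{(1+w)e^{w}}$. Differentiating $g(x) := (w^{2} - w + 1)e^{w}$ with the product and chain rules yields $g'(x) = e^{w}\,\W'(x)\,\bigl[(2w - 1) + (w^{2} - w + 1)\bigr]$. The bracketed sum collapses to $w^{2} + w = w(1+w)$, and after substituting $\W'(x) = \frac{1}{(1+w)e^{w}}$ the factors $e^{w}$ and $(1+w)$ cancel, leaving $g'(x) = w = \W(x)$, as desired. Since the domain $[-1/e,\infty)$ of the principal branch is connected, a single constant of integration $C$ is legitimate across the whole interval; I would note in passing that the calculation is valid at $x = 0$ as well, where $\W(0) = 0$ and $\W'(0) = 1$.

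For the second identity I would not differentiate again but instead apply identity~\ref{W2}, $e^{w} = x/w$, to write $(w^{2} - w + 1)e^{w} = \frac{x}{w}(w^{2} - w + 1) = x\bigl(w - 1 + \tfrac{1}{w}\bigr)$, which is exactly the claimed form; since $x\bigl(w - 1 + \tfrac{1}{w}\bigr) = x(w-1) + \tfrac{x}{w} \to 1$ as $x \to 0$, this representation extends continuously through $x = 0$, so the two expressions agree there too. As an internal consistency check one could instead verify the second form directly, now using the alternative expression $\W'(x) = \frac{\W(x)}{x(1+\W(x))}$ of Proposition~\ref{prop:Wderivative}. Because the whole argument is a verification rather than a discovery, there is no genuine obstacle; the only steps deserving a moment's attention are the collapse of the bracketed sum to $w(1+w)$ and the mild subtlety at $x = 0$.
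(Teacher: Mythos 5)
Your verification is correct: differentiating $(w^2-w+1)e^{w}$ with $w=\W(x)$ gives $e^{w}\W'(x)\bigl[(2w-1)+(w^2-w+1)\bigr]=e^{w}\W'(x)\,w(1+w)$, which collapses to $\W(x)$ upon substituting $\W'(x)=\frac{1}{(1+w)e^{w}}$, and the passage to the second form via $e^{w}=x/w$ (identity~\ref{W2}) is exactly right. Note that the paper itself supplies no proof of this proposition at all --- it simply cites \cite[Section~3]{CorlessKnuth} --- so your argument is a complete, self-contained verification where the paper relies on a reference; the differentiation route you take is the standard one and is what that reference does as well. Your side remarks on the connectedness of the domain of the principal branch and on the behaviour at $x=0$ (where $x/\W(x)=e^{\W(x)}\to 1$) are accurate and harmless, though not needed for the statement as given.
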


Using Proposition~\ref{prop:Wderivative}, we also have the following.

\begin{proposition}\label{prop:Wderivatives}
	The following hold:
	\begin{enumerate}[label=\emph{(\roman*)}]
		\item\label{W5} $\frac{d}{dx} \W(e^x)=\frac{\W(e^x)}{1+\W(e^x)}$.
		\item\label{W6}  $ \frac{d}{dx}\left(\W(e^x)+\frac{1}{2}\W(e^x)^2\right)=\W(e^x)$;
		\item\label{W7} $\frac{d}{dx}e^{\W(x)} = \frac{1}{1+\W(x)}$;
	\end{enumerate}
\end{proposition}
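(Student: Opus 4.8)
The plan is to derive all three identities directly from the chain rule together with the two equivalent formulas for $\W'$ in Proposition~\ref{prop:Wderivative} and the elementary identities \ref{W1}--\ref{W8}. None of the three parts should require more than a couple of lines, and the only real decision in each case is which form of $\W'$ makes the ensuing cancellation immediate.

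For \ref{W5}, I would start from $\frac{d}{dx}\W(e^x) = \W'(e^x)\,e^x$ and substitute the second expression from Proposition~\ref{prop:Wderivative}, namely $\W'(y) = \frac{\W(y)}{y(1+\W(y))}$ with $y = e^x$ (legitimate since $e^x \neq 0$). This yields $\W'(e^x)\,e^x = \frac{\W(e^x)}{e^x(1+\W(e^x))}\cdot e^x = \frac{\W(e^x)}{1+\W(e^x)}$. Alternatively one may use the first form $\W'(y) = \frac{1}{(1+\W(y))e^{\W(y)}}$ and then clear the exponential via \ref{W2}, $e^{\W(e^x)} = e^x/\W(e^x)$; this route avoids any mention of $x = 0$. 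For \ref{W6}, I would differentiate termwise and factor, $\frac{d}{dx}\big(\W(e^x) + \tfrac12\W(e^x)^2\big) = \big(1 + \W(e^x)\big)\frac{d}{dx}\W(e^x)$, then insert \ref{W5} so that the factor $1+\W(e^x)$ cancels and only $\W(e^x)$ remains. For \ref{W7}, I would apply the chain rule to $x \mapsto e^{\W(x)}$ to get $e^{\W(x)}\W'(x)$ and substitute the first form of $\W'$; the factor $e^{\W(x)}$ cancels the $e^{\W(x)}$ in the denominator, leaving $\frac{1}{1+\W(x)}$.

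The only point requiring a word of care is the admissible range of $x$ and the non-vanishing of the denominators. For \ref{W5} and \ref{W6} the argument is $e^x > 0$, so $\W(e^x) > 0$ and $1 + \W(e^x) \geq 1$, and every step is valid on all of $\R$; for \ref{W7} one needs $1 + \W(x) \neq 0$, which holds on the principal branch away from the branch point $x = -1/e$, where $\W$ fails to be differentiable anyway. I do not anticipate a genuine obstacle: the substantive content is entirely contained in the identities and the derivative formula already established, so the ``hard part'' is merely the bookkeeping of selecting, in each item, the representation of $\W'$ that renders the cancellation transparent.
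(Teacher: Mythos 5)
Your proposal is correct and follows essentially the same route as the paper: each part is the chain rule combined with the derivative formula from Proposition~\ref{prop:Wderivative}, with the appropriate form of $\W'$ chosen so the cancellation is immediate. The paper's own proof states exactly this (in even more abbreviated form), so your write-up is simply a more explicit version of the intended argument.
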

\begin{proof}
	\ref{W5}: Apply the chain rule along with the identity from Proposition~\ref{prop:Wderivative} to differentiate $\W(e^x)$.

	\ref{W6}: Apply the chain rule along with the identity from Proposition~\ref{prop:Wderivative} to differentiate $\left(\W(e^x)+\frac{1}{2}\W(e^x)^2\right)$.

	\ref{W7}: Apply the chain rule along with the identity from Proposition~\ref{prop:Wderivative} to differentiate $e^{\W(x)}$.
\end{proof}

\subsection{Preliminaries on Convex Analysis}\label{ss:convexanalysisprelims}

Throughout, $X$ is a Hilbert space.

\begin{definition}\label{F}As in \cite{BLT2007}, we will work with the following set of functions:
	\begin{equation*}
	\mathcal{F}:= \left \{f: X \rightarrow \left]-\infty,\infty \right]\; |\; f \text{ is convex, lower semicontinuous, and proper}  \right \}.
	\end{equation*}
\end{definition}

\begin{figure}
	\begin{center}
		\includegraphics[width=.4\textwidth]{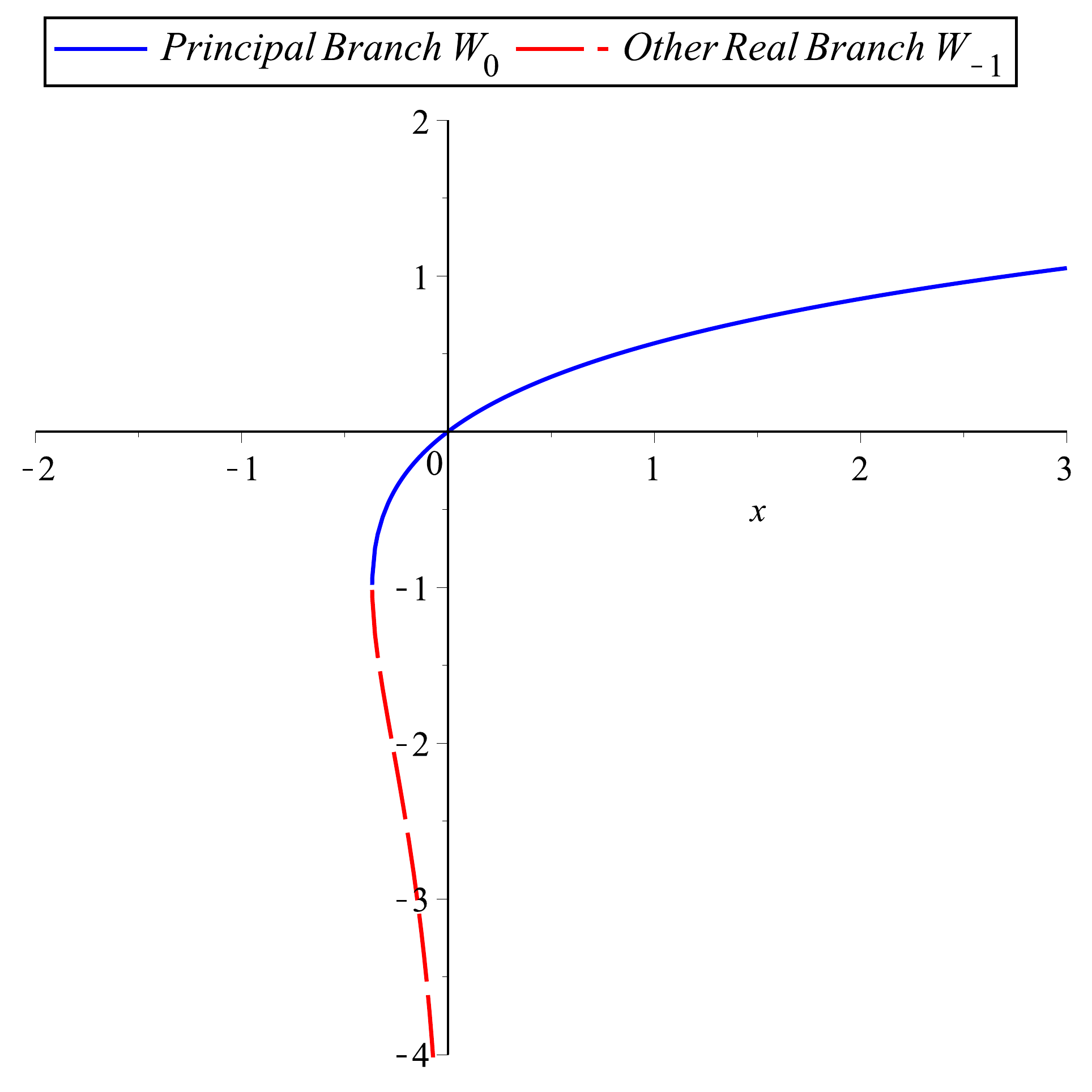}
	\end{center}
	\label{fig:Wprincipal}
	\caption{The two real branches of Lambert $\W$.}
\end{figure}

\begin{definition}[Fenchel Conjugate]The Fenchel conjugate $f^*$ of a function $f:X \rightarrow [-\infty,\infty]$ is defined as follows:
	\begin{align*}
	f^*: X^* &\rightarrow [-\infty,\infty]\\
	f^*:x &\mapsto \underset{y \in X}{\sup} \left \{ \langle x,y \rangle -f(y) \right \}.
	\end{align*}
	This is also often referred to as a \emph{convex conjugate} or \emph{Fenchel-Moreau conjugate}.
\end{definition}

The function $f^*$ is always convex, i.e. its epigraph is convex. Moreover, we have the following.

\begin{proposition}{\cite[Proposition 16.4]{BC}}
Let $f \in \mathcal{F}$ and $x \in {\rm dom}\, \partial f$. Then $f^{**}=f$ and $\partial f^{**}(x)=\partial f(x)$.
\end{proposition}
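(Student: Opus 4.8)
The plan is to derive both assertions from the Fenchel--Moreau biconjugation theorem together with the characterization of the subdifferential via the Fenchel--Young equality. For the identity $f^{**}=f$: since $f\in\mathcal F$ is proper, convex, and lower semicontinuous, this is precisely the content of the Fenchel--Moreau theorem, and the cleanest route is to quote it (e.g.\ from \cite{BC}). If a self-contained argument is wanted, one would argue that $\operatorname{epi} f$ is a nonempty closed convex subset of $X\times\R$, so for any $x_0\in X$ and any $r<f(x_0)$ the Hahn--Banach separation theorem yields a continuous affine minorant $a\le f$ with $a(x_0)>r$ (the case $f(x_0)=+\infty$ being the fiddly one: one first produces a single continuous affine minorant and then tilts it to be arbitrarily large at $x_0$); since $f^{**}$ is the pointwise supremum of the continuous affine minorants of $f$, this gives $f^{**}(x_0)\ge f(x_0)$, while the reverse inequality $f^{**}\le f$ is just the Fenchel--Young inequality. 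Hence $f^{**}=f$.

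For the subdifferential identity, I would first record the elementary equivalence, valid for any function $g$ and any $x,x^*$,
\[
x^*\in\partial g(x)\iff g(x)+g^*(x^*)=\langle x,x^*\rangle,
\]
which follows by writing out the definitions of $g^*$ and $\partial g$ and using the Fenchel--Young inequality $g(x)+g^*(x^*)\ge\langle x,x^*\rangle$. I would then apply this with $g=f$ and with $g=f^{**}$, invoking the first part ($f^{**}=f$) and the fact that $f^{***}=f^*$, which is Fenchel--Moreau applied to $f^*$ --- itself proper, convex, and lower semicontinuous because $f\in\mathcal F$. The two resulting membership conditions then coincide:
\[
x^*\in\partial f(x)\iff f(x)+f^*(x^*)=\langle x,x^*\rangle\iff f^{**}(x)+f^{***}(x^*)=\langle x,x^*\rangle\iff x^*\in\partial f^{**}(x),
\]
so $\partial f^{**}(x)=\partial f(x)$. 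Under the hypothesis $f\in\mathcal F$ this in fact holds at every $x\in X$; the restriction $x\in{\rm dom}\,\partial f$ becomes essential only in the more general version of the statement, where $f$ is not assumed convex and lower semicontinuous and one has merely $f^{**}\le f$ with equality on ${\rm dom}\,\partial f$.

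The only step with genuine content is the Fenchel--Moreau theorem, which rests on the Hahn--Banach separation theorem and on the mildly delicate construction of affine minorants at points where $f=+\infty$; everything after that is routine manipulation of conjugates. Since \cite{BC} is already a standing reference, the most economical write-up cites Fenchel--Moreau for $f^{**}=f$ and $f^{***}=f^*$ and then performs the short chain of equivalences above.
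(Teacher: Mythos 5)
Your argument is correct. Note, however, that the paper does not prove this proposition at all: it is stated purely as a citation to \cite[Proposition~16.4]{BC}, so there is no in-paper proof to compare against. What you supply is the standard textbook derivation --- Fenchel--Moreau for $f^{**}=f$ (with the separation/affine-minorant sketch handling the $f(x_0)=+\infty$ case correctly), then the Fenchel--Young equality characterization $x^*\in\partial g(x)\iff g(x)+g^*(x^*)=\langle x,x^*\rangle$ applied to $g=f$ and $g=f^{**}$ together with $f^{***}=f^*$ --- and every step is sound for proper $g$, which is all that is needed here since $f\in\mathcal F$ forces $f$ and $f^{**}$ proper. Your closing observation is also the right diagnosis of the hypothesis: for $f\in\mathcal F$ the conclusion holds at every $x$, and the restriction to $x\in{\rm dom}\,\partial f$ is inherited from the more general form of \cite[Proposition~16.4]{BC}, where $f$ is merely proper and one only gets $f^{**}\le f$ with equality (and coincidence of subdifferentials) at points where $\partial f(x)\neq\varnothing$. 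In short: correct, self-contained, and strictly more informative than the paper's citation; the only cost is that it imports the full strength of Fenchel--Moreau, which the citation route leaves as a black box.
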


\begin{definition}[Argmin operator $\p$ for Fenchel conjugates]Let $f$ be a proper convex function and $f^*$ its conjugate. We define $\p_{f}$ to be a selection operator satisfying
	\begin{equation}
	\p_f(x) \in \underset{y \in X}{\argmin} \left \{ \langle x,y \rangle -f(y) \right \}.
	\end{equation}
	so that we may express the closed form for $f^*$ as
	\begin{equation}
	f^*(x) = \underset{y \in X}{\sup} \{ \langle x,y \rangle -f(y)\} = \langle x, \p_f(x)\rangle - f(\p_f(x)).
	\end{equation}
\end{definition}

\section{Proximal Averages}\label{s:pavs}

The systematic investigation of the proximal average started in 2008
\cite{BLT2007}, relying crucially on an important result of
Bauschke, E.\ Matou\u{s}kov\'{a}, and S.\ Reich \cite[Theorem~6.1]{BMR2004}.
Research on the topic continues to grow.
One noteworthy recent application is Y.L.\ Yu's 2013
employment of the proximal average to analyse a novel
proximal gradient algorithm \cite{Yu}.

\begin{definition}[Proximal Average]\label{def:pav}The proximal average operator is
	\begin{align*}
	\mathcal{P}: \mathcal{F} \times \left[0,1\right] \times \mathcal{F} &\rightarrow \left \{f | f: X \rightarrow \left[ -\infty,+\infty \right] \right \}\\
	\left(f_0,\lambda,f_1\right)&\mapsto \resizebox{.7\textwidth}{!}{$%
	\left((1-\lambda)\left(f_0 + \frac{1}{2}\|\cdot \|^2\right)^* + \lambda \left(f_1+\frac{1}{2}\| \cdot \|^2\right)^*  \right)^* - \frac{1}{2}\|\cdot \|^2.$%
	}
	\end{align*}
	See, for example, \cite[Definition~4.1]{BLT2007}.
\end{definition}

\begin{remark}[Symmetric and convex properties of proximal averages]Let $f_0,f_1 \in \mathcal{F}$ and $\lambda \in [0,1]$. Then we have that
	\begin{equation}
	\mathcal{P}(f_0,0,f_1)=f_0, \quad \mathcal{P}(f_0,1,f_1)=f_1, \quad \text{and} \; \mathcal{P}(f_0,\lambda,f_1)=\mathcal{P}(f_1,1-\lambda,f_0).
	\end{equation}
	We also have that $\mathcal{P}(f_0,\lambda,f_1)$ is convex.
	See, for example, \cite[Proposition~4.2]{BLT2007}.
\end{remark}

\begin{remark}[Conjugacy of proximal averages]\label{rem:conjugacy}When $f_0,f_1 \in \mathcal{F}$ and $\lambda \in [0,1]$ we have that
	\begin{equation}\label{conjugacy}
	\left(\mathcal{P}(f_0,\lambda,f_1) \right)^* = \mathcal{P}(f_{0}^{*},\lambda,f_1^*).
	\end{equation}
	See, for example, \cite[Theorem 4.3]{BLT2007} or \cite[Theorem 6.1]{BMR2004}.
\end{remark}

\begin{definition}[Simplified notation for proximal averages]We will follow a convenient convention from \cite{BLT2007}. Let $f_0,f_1 \in \mathcal{F}$ and $\lambda \in [0,1]$. Let
	\begin{equation*}
	f_\lambda := \mathcal{P}(f_0,\lambda,f_1) \quad \text{and}\quad f_\lambda^* :=\mathcal{P}(f_0^*,\lambda,f_1^*).
	\end{equation*}
	From Remark~\ref{rem:conjugacy} we have that $(f_\lambda)^* = (f^*)_\lambda$, which shows that $f_\lambda^*$ is not ambiguous.
\end{definition}

\begin{definition}[epi-convergence and epi-topology]\label{def:epiconvergence}
	Let $f$ and $\left(f_n \right)_{n \in \mathbb{N}}$ be functions from $X$ to $\left]-\infty,+\infty \right]$. Then $(f_n)_{n \in \mathbb{N}}$ epi-converges to $f$ if for every $x \in X$ the following hold
	\begin{enumerate}[label=(\roman*)]
		\item For every sequence $(x_n)_{n \in \mathbb{N}}$ in $X$ converging to $x$, one has $f(x) \leq \liminf f_n(x_n)$.
		\item There exists a sequence $(y_n)_{n \in \mathbb{N}}$ in $X$ converging to $x$ such that $\limsup f_n (y_n) \leq f(x)$.
	\end{enumerate}
	in which case we write $f_n \overset{\scriptsize \e}{\rightarrow} f$. The \emph{epi-topology} is the topology induced by epi-convergence. See, for example, \cite[Definition 5.1]{BLT2007}. For greater detail, see \cite{RW}.
\end{definition}

\begin{remark}[Continuity of $\mathcal{P}$]\label{rem:continuity} Suppose that $\mathcal{F}$ is equipped with the epi-topology. Then the proximal average operator $\mathcal{P}: \mathcal{F} \times [0,1] \times \mathcal{F} \rightarrow \mathcal{F}$ is continuous. In other words, where $(f_n)_{n\in \mathbb{N}},(g_n)_{n\in \mathbb{N}}$ are sequences in $\mathcal{F}$ and $(\lambda_n)_{n \in \mathbb{N}}$ is a sequence in $[0,1]$ such that $f_n \overset{\scriptsize \e}{\rightarrow}f,g_n \overset{\scriptsize \e}{\rightarrow}g$, and $\lambda_n \rightarrow \lambda$, then we have that:
	\begin{equation}
	\mathcal{P}(f_n,\lambda_n,g_n) \overset{\scriptsize \e}{\rightarrow} \mathcal{P}\left(f,\lambda,g \right) \quad \text{as}\quad n \rightarrow \infty
	\end{equation}
For a proof, see, for example, \cite[Theorem 5.4]{BLT2007}.
\end{remark}

\section{Proximal Averages Employing Lambert $\W$}\label{s:pavswithW}

\begin{definition}We define the negative Boltzmann--Shannon entropy as follows:
	\begin{equation}
	\ent : \R \rightarrow \R \cup \{\infty \}: \quad x \mapsto \begin{cases}
	x\log x -x & x\in \left]0,\infty \right]\\
	0 & x=0\\
	\infty & \text{otherwise}
	\end{cases}
	\end{equation}
\end{definition}

In \cite{BL2016} the authors considered the average (\emph{not} the proximal average) given by
\begin{equation}\label{originalaverage}
f_t(x)=(1-t)\ent(x)+t\frac{x^2}{2}
\end{equation}
for $0\leq t \leq 1$ so that $f_0$ is the Boltzmann--Shannon entropy and $f_1$ is the energy. For clarity, we will refer to such an average as a \emph{weighted} average, in order to distinguish it from the \emph{proximal} average, and we will consistently use $t$ for the former and $\lambda$ for the latter.

The Borwein and Lindstrom then obtained the conjugate as follows:
\begin{equation}\label{originalconjugate}f_t^*(y) =\dfrac{(1-t)^2}{2t}\left(\W\left(\frac{t}{1-t}e^{\dfrac{y}{1-t}}\right)+2\right)\W\left(\frac{t}{1-t}e^{\dfrac{y}{1-t}}\right).\end{equation}

\begin{remark}[Limiting Cases for Weighted Average]\label{originallimitingcases}In \eqref{originalaverage}, if one considers the \emph{limit} for $f_t$ as $t\rightarrow 1$ we obtain the \emph{positive energy} which is infinite at negative points. In the limit as $t\rightarrow 0$ we recover $\ent(x)$. For its conjugate in $f_t^*$ in \eqref{originalconjugate}, if one considers the limit for at $t=0$ we recover $\exp(x)$ which is the conjugate of $\ent(x)$. In the limit at $t=1$ we obtain
	\begin{equation*}
	x \mapsto \begin{cases}
	\frac{x^2}{2} & x>0\\
	0 & \text{otherwise}
	\end{cases}.
	\end{equation*}
	We would expect this, given that $\frac{(\cdot)^2}{2}$ is self-conjugate while $\ent(x)$ is infinite for $x<0$. Notice, however, that $f_1^* = \frac{1}{2}|\cdot|^2$, and so we do not reobtain $f_1^*$ in the limiting case as $t\rightarrow 1$.
\end{remark}

Both $f_t$ and $f_t^*$ may be seen in Figure~\ref{fig:original_average}.

\begin{figure}
	\begin{center}
		%\vspace*{\fill}
		\includegraphics[height=5cm]{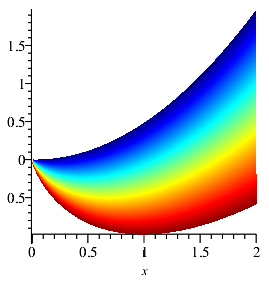}
		%\vspace*{\fill}
		\includegraphics[height=5cm]{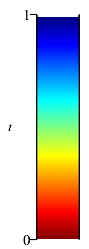}
		%\vspace*{\fill}
		\includegraphics[height=5cm]{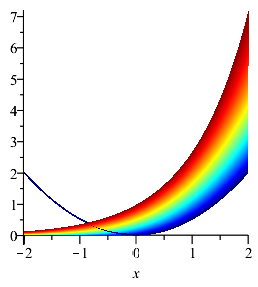}
		%\vspace*{\fill}
	\end{center}
	\caption{$f_t$ from \eqref{originalaverage} (left) and $f_t^*$ from \eqref{originalconjugate} (right).} \label{fig:original_average}
\end{figure}

However, instead of \eqref{originalaverage}, it is more natural to consider $f_\lambda = \mathcal{P}(\ent,\lambda,\frac{(\cdot)^2}{2})$. We will compute $f_\lambda$ and its conjugate $f_\lambda^*$ which is the natural
analogue to~\eqref{originalconjugate}.

\subsection{Form and proof for $f_\lambda$}\label{ss:flambda}

Throughout the following $\lambda \in \left]0,1\right[$.

\begin{lemma}\label{lem:leftandrightconjs}Let $f_0 := \ent$ and $f_1 := \frac12 (\cdot)^2$. Then we have the following:
	\begin{enumerate}[label=\emph{(\roman*)}]
	\item \label{leftconj} $\left(f_0 + \frac{1}{2}(\cdot)^2 \right)^* = \frac{1}{2}\W(e^x)\left(\W(e^x)+2 \right)$
	\item \label{rightconj} $\left(f_1 + \frac{1}{2}(\cdot)^2 \right)^* = \frac{1}{4}(\cdot)^2$
	\end{enumerate}
\end{lemma}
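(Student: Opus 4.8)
The plan is to compute each conjugate directly from the definition, since both inner functions are smooth on their domains and one-dimensional, so the supremum defining the conjugate is attained at a critical point.

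For \ref{leftconj}, I would start from
\[
\left(f_0 + \tfrac12(\cdot)^2\right)^*(x) = \sup_{y>0}\left\{ xy - y\log y + y - \tfrac12 y^2 \right\},
\]
noting that the objective is concave in $y$ and $\to-\infty$ as $y\to 0^+$ or $y\to\infty$, so the sup is attained at the unique stationary point. Differentiating in $y$ gives $x - \log y - y = 0$, i.e. $\log y + y = x$. The key step is to solve this for $y$ in terms of $\W$: exponentiating yields $y e^{y} = e^{x}$, whence $y = \W(e^{x})$ by the defining property \ref{W1} of the principal branch (and $y>0$ since $e^x>0$). Substituting back, I would use $\log y = \log\W(e^x) = x - \W(e^x)$ from identity \ref{W8} to simplify $xy - y\log y + y - \tfrac12 y^2$: the $xy$ and $-y\log y = -y(x-\W(e^x))$ terms combine to leave $y\W(e^x) + y - \tfrac12 y^2 = \W(e^x)^2 + \W(e^x) - \tfrac12\W(e^x)^2 = \tfrac12\W(e^x)^2 + \W(e^x)$, which is exactly $\tfrac12\W(e^x)(\W(e^x)+2)$. (Alternatively, one can verify the claimed formula by differentiating it via Proposition~\ref{prop:Wderivatives}\ref{W6} and checking it agrees with the Legendre transform, but the direct computation is cleaner.)

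For \ref{rightconj}, $f_1 + \tfrac12(\cdot)^2 = \tfrac12(\cdot)^2 + \tfrac12(\cdot)^2 = (\cdot)^2$, and the conjugate of $y\mapsto y^2$ is $x\mapsto \tfrac14 x^2$ by the elementary rule $(\tfrac{c}{2}(\cdot)^2)^* = \tfrac{1}{2c}(\cdot)^2$ with $c=2$; this is immediate and needs no real work.

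The only genuine obstacle is the first part, and even there the difficulty is purely bookkeeping: recognizing that the stationarity equation $\log y + y = x$ is precisely the functional equation defining $\W\circ\exp$, and then applying identity \ref{W8} at the right moment so that the $x$-dependence in the substituted expression collapses. One should also remark briefly that the domain condition $y>0$ is automatically satisfied (so the constraint from $\operatorname{dom}\ent$ is inactive) and that strict concavity guarantees the critical point is the global maximizer, which justifies writing an equality rather than an inequality.
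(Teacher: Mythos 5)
Your proposal is correct and follows essentially the same route as the paper's proof: differentiate the inner term, solve the stationarity condition $\log y + y = x$ by rewriting it as $ye^{y}=e^{x}$ so that $y=\W(e^{x})$, and then simplify the substituted expression using $\log\W(e^{x}) = x-\W(e^{x})$, with part \ref{rightconj} dispatched as elementary arithmetic. Your additional remarks on strict concavity, the boundary behaviour of the objective, and the inactivity of the constraint $y>0$ are sensible refinements that the paper leaves implicit, but they do not change the argument.
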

\begin{proof}
	\ref{leftconj}: By definition,
	\begin{equation*}
	\left(f_0 + \frac{1}{2}(\cdot)^2 \right)^*(x) = \underset{y \in {\small \R}}{\sup} \left \{xy-f_0(y)-\frac12 y^2 \right \}.
	\end{equation*}
	Differentiating the inner term with respect to $y$ and setting equal to zero, we have that the supremum is obtained when $y$ satisfies $x-\log(y)-y=0$. Solving for $y$, we obtain
	\begin{align*}
	\log(y)&=x-y,\\
	\text{and so}\quad y&=\frac{e^x}{e^y},\\
	\text{which simplifies to}\quad ye^y &=e^x,
	\end{align*}
	and so $y=\W(e^x)$. Substituting this value back into $xy-f_0(y)-\frac12 y^2$, we have that
	\begin{equation*}
	\left(f_0 + \frac{1}{2}(\cdot)^2 \right)^*(x) = x\W(e^x) -\W(e^x)\log\left(\W(e^x) \right)+\W(e^x) - \frac{1}{2}\W(e^x)^2.
	\end{equation*}
	Factoring and employing the fact that $(\log \circ \W)(z) = \log(z)-\W(z)$ we obtain
	\begin{align*}
	\left(f_0 + \frac{1}{2}(\cdot)^2 \right)^*(x) =\frac{1}{2}\W(e^x)\left(2x + \W(e^x) - 2\log(e^x)+2\right)
	\end{align*}
	Because $x$ is real, the right hand side further simplifies to $\frac{1}{2}\W(e^x)\left(\W(e^x)+2 \right)$, completing the proof of \ref{leftconj}.\newline
	\ref{rightconj}: This is a well known result and may be obtained by simple arithmetic.
\end{proof}

Note that we may recognize the term $\frac{1}{2}\W(e^x)(\W(e^x)+2)$ as an antiderivative of $\W(e^x)$ (see Proposition~\ref{prop:Wderivatives}), a fact we will exploit in the following lemma.

\begin{lemma}\label{lem:prox}
	Let $\varphi$ be defined as follows
	\begin{align*}
	\varphi &:= \left((1-\lambda)\left(\frac12 \W(e^{(\cdot)})(\W(e^{(\cdot)})+2)\right) + \lambda\left(\frac14(\cdot)^2 \right) \right)^*.
	\end{align*}
	Then it holds that
	\begin{equation}\label{term0}
	\p_\varphi (x) = - \frac{(\frac{2}{\lambda}-2)\W \left(\left(\frac{2}{\lambda}-1 \right)e^{\frac{2x}{\lambda}}   \right)}{\frac{2}{\lambda}-1}+\frac{2x}{\lambda}
	\end{equation}
	so that we may explicitly write
	\begin{equation*}
	\varphi(x) = x \p_\varphi(x)- (1-\lambda)\left(\frac{1}{2}\W(e^{\p_\varphi(x)})(\W(e^{\p_\varphi(x)})+2) \right) -\frac{\lambda}{4}\p_\varphi(x)^2.
	\end{equation*}
\end{lemma}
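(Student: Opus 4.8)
The plan is to pin down the unique maximizer in the supremum defining $\varphi$ via a first-order stationarity condition, and then read off the closed form for $\varphi$ directly from the definition of $\p_\varphi$. Write $g := (1-\lambda)\bigl(\tfrac12\W(e^{(\cdot)})(\W(e^{(\cdot)})+2)\bigr)+\tfrac{\lambda}{4}(\cdot)^2$, so that $\varphi = g^{*}$ and $\p_\varphi(x)\in\underset{y\in\R}{\argmin}\{\langle x,y\rangle-g(y)\}$. First I would record that $g$ is a finite, differentiable, strictly convex function on $\R$: its first summand is a Fenchel conjugate by Lemma~\ref{lem:leftandrightconjs}\ref{leftconj} (hence convex), its second summand is convex, and strict convexity follows since $g''(y)=(1-\lambda)\tfrac{\W(e^y)}{1+\W(e^y)}+\tfrac{\lambda}{2}>0$ by Proposition~\ref{prop:Wderivatives}\ref{W5}. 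Because $\W(e^y)\to 0$ as $y\to-\infty$, one checks $g'(y)\to-\infty$ as $y\to-\infty$ and $g'(y)\to+\infty$ as $y\to+\infty$, so $g'\colon\R\to\R$ is a continuous, strictly increasing bijection; consequently, for each $x\in\R$ the concave function $y\mapsto\langle x,y\rangle-g(y)$ attains its supremum at the unique point solving $x=g'(y)$, which is therefore a legitimate (and the only) choice of $\p_\varphi(x)$.

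Next I would compute $g'$ explicitly. Since $\tfrac12\W(e^y)(\W(e^y)+2)=\W(e^y)+\tfrac12\W(e^y)^2$, Proposition~\ref{prop:Wderivatives}\ref{W6} gives $\tfrac{d}{dy}\bigl[\tfrac12\W(e^y)(\W(e^y)+2)\bigr]=\W(e^y)$, and hence
\[
g'(y)=(1-\lambda)\W(e^y)+\tfrac{\lambda}{2}\,y .
\]
Thus the stationarity condition $x=g'(y)$ reads $x=(1-\lambda)\W(e^y)+\tfrac{\lambda}{2}y$. The key substitution is $w:=\W(e^y)$: from $we^{w}=e^{y}$ one obtains $y=w+\log w$, so after multiplying by $2/\lambda$ the condition becomes
\[
\tfrac{2x}{\lambda}=\Bigl(\tfrac{2}{\lambda}-1\Bigr)w+\log w .
\]

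The crux of the argument — and the step I expect to demand the most care, although it amounts to bookkeeping with the defining property $\W(z)e^{\W(z)}=z$ of the Lambert $\W$ function — is inverting this transcendental equation for $w$. Putting $a:=\tfrac{2}{\lambda}-1>0$ (positive because $\lambda\in\,]0,1[$) and exponentiating gives $w\,e^{aw}=e^{2x/\lambda}$; multiplying through by $a$ yields $(aw)e^{aw}=a\,e^{2x/\lambda}$, and since $a\,e^{2x/\lambda}>0$ lies in the range of $t\mapsto te^{t}$ and $aw>0$, the defining property of $\W$ gives $aw=\W\!\bigl(a\,e^{2x/\lambda}\bigr)$, that is, $w=\tfrac{1}{a}\W\!\bigl(a\,e^{2x/\lambda}\bigr)$. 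Substituting back through $y=w+\log w=\tfrac{2x}{\lambda}-(a-1)w$ together with $a-1=\tfrac{2}{\lambda}-2$ recovers exactly the asserted formula~\eqref{term0} for $\p_\varphi(x)$, valid for every $x\in\R$ since the argument of $\W$ is always positive. Finally, since $\p_\varphi(x)$ is the maximizer, the definition of $\p_\varphi$ gives $\varphi(x)=x\,\p_\varphi(x)-g(\p_\varphi(x))$, which upon expanding $g$ is precisely the displayed closed form $\varphi(x)=x\,\p_\varphi(x)-(1-\lambda)\bigl(\tfrac12\W(e^{\p_\varphi(x)})(\W(e^{\p_\varphi(x)})+2)\bigr)-\tfrac{\lambda}{4}\p_\varphi(x)^{2}$.
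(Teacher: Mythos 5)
Your proof is correct, but it takes a genuinely different route from the paper's. Both arguments begin from the same stationarity condition $x=(1-\lambda)\W(e^y)+\tfrac{\lambda}{2}y$ (the paper's \eqref{exprmanual}), but the paper then treats \eqref{term0} as a conjectured answer and \emph{verifies} it: the equation is recast in an exponential form, the candidate $\p_\varphi(x)$ is substituted in, and a lengthy cancellation using $e^{a\W(z)+b}=(z/\W(z))^a e^b$ confirms the identity. (Indeed the authors note that neither \emph{SCAT} nor \emph{Maple} could invert \eqref{functiontosolve}, so the formula was guessed from special cases.) You instead \emph{derive} the formula: the substitution $w=\W(e^y)$, with $y=w+\log w$, turns the stationarity condition into $\tfrac{2x}{\lambda}=\bigl(\tfrac{2}{\lambda}-1\bigr)w+\log w$, which exponentiates to $(aw)e^{aw}=a\,e^{2x/\lambda}$ with $a=\tfrac{2}{\lambda}-1>0$ and is then solved exactly by the defining property of $\W$; back-substitution gives \eqref{term0} in two lines. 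Your route is shorter, explains \emph{why} the answer has the form it does, and sidesteps the guess-and-check step entirely. You also supply something the paper leaves implicit: strict convexity of $g$ (via Lemma~\ref{lem:leftandrightconjs} and Proposition~\ref{prop:Wderivatives}) and surjectivity of $g'$, which together justify that the supremum is attained at the unique critical point, so that $\p_\varphi$ is well defined and single-valued. The paper's verification approach buys only independence from having to solve the transcendental equation, which your substitution renders unnecessary.
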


\begin{proof}
	By definition,
	\begin{equation*}
	\varphi(x) = \underset{y \in \R}{\sup}\{xy- (1-\lambda)\left(\frac{1}{2}\W(e^y)(\W(e^y)+2) \right) -\frac{\lambda}{4} y^2 \}.
	\end{equation*}
	Differentiating the inner term with respect to $y$ and setting equal to zero, we obtain
	\begin{equation}\label{exprmanual}
	x-(1-\lambda)\W(e^y)-\frac{\lambda}{2}y=0.
	\end{equation}
	We will show that \eqref{exprmanual} is true if $y=\p_\varphi(x)$. First we will rewrite \eqref{exprmanual} using the fact that $\W(a) = b$ if and only if $be^b = a$, which allows us to remove the $\W(e^y)$ term as follows:
	\begin{align*}
	\W(e^y)&=\frac{x-\frac{\lambda}{2}y}{1-\lambda},\\
	\left(\frac{x-\frac{\lambda}{2}y}{1-\lambda} \right)e^{\left(\frac{x-\frac{\lambda}{2}y}{1-\lambda} \right)} &= e^y,\\
	\left(x-\frac{\lambda}{2}y \right)e^{\frac{y\lambda-2x}{2(\lambda-1)}}&=(1-\lambda)e^y.
	\end{align*}
	This is equivalent to the form returned by \emph{Maple},
	\begin{equation}\label{functiontosolve}
	e^{\frac{y\lambda -2x}{2(\lambda-1)}}y\lambda -2e^y \lambda -2xe^{\frac{y\lambda-2x}{2(\lambda-1)}}+2e^y =0,
	\end{equation}
	and so again naivety need not inhibit the discovery. We will use \emph{Maple}'s form. We need only to show that
	\begin{equation}\label{WTS}
	e^{\frac{\p_\varphi(x)\lambda -2x}{2(\lambda-1)}}\p(x)\lambda -2e^{\p_\varphi(x)} \lambda -2xe^{\frac{\p_\varphi(x)\lambda-2x}{2(\lambda-1)}}+2e^{\p_\varphi(x)} =0.
	\end{equation}
	First consider the term $e^{\p_\varphi(x)}$. Since for any $a,b,z$ we have that
	$$e^{aW(z)+b}=(e^{W(z)})^a e^b = \left(\frac{z}{W(z)}\right)^a e^b, $$
	we may let
	\begin{equation}
	a:=-\left(\frac{\frac{2}{\lambda}-2}{\frac{2}{\lambda}-1}\right), \quad b:=\frac{2x}{\lambda}, \quad z:=\left(\frac{2}{\lambda}-1\right)e^{\frac{2x}{\lambda}},
	\end{equation}
	and thusly rewrite
	\begin{equation}\label{term1}
	e^{\p_\varphi(x)}= \left(\frac{\left(\frac{2}{\lambda}-1\right)e^{\frac{2x}{\lambda}}}{\W \left(\left(\frac{2}{\lambda}-1\right)e^{\frac{2x}{\lambda}} \right)}\right)^{-\frac{\frac{2}{\lambda}-2}{\frac{2}{\lambda}-1}} e^{\frac{2x}{\lambda}}.
	\end{equation}
	Next consider the term $e^{\frac{\p_\varphi(x)\lambda  -2x}{2(\lambda-1)}}$. Using \eqref{term1}, we may rewrite it thusly:
	\begin{align}
	e^{\frac{\p_\varphi(x)\lambda  -2x}{2(\lambda-1)}} &= e^{\frac{-2x}{2(\lambda-1)}}\left(e^{\p_\varphi(x)}\right)^{\frac{\lambda}{2(\lambda-1)}}\nonumber \\
	&=e^{\frac{-2x}{2(\lambda-1)}}\left(\left(\frac{\left(\frac{2}{\lambda}-1\right)e^{\frac{2x}{\lambda}}}{\W \left(\left(\frac{2}{\lambda}-1\right)e^{\frac{2x}{\lambda}} \right)}\right)^{-\frac{\frac{2}{\lambda}-2}{\frac{2}{\lambda}-1}} e^{\frac{2x}{\lambda}}\right)^{\frac{\lambda}{2(\lambda-1)}}\nonumber \\
	&= e^{\frac{-2x}{2(\lambda-1)}}\left(\frac{\left(\frac{2}{\lambda}-1\right)e^{\frac{2x}{\lambda}}}{\W \left(\left(\frac{2}{\lambda}-1\right)e^{\frac{2x}{\lambda}} \right)}\right)^{-\frac{\lambda}{\lambda-2}}e^{\frac{2x}{2(\lambda-1)}}\nonumber \\
	&=\left(\frac{\left(\frac{2}{\lambda}-1\right)e^{\frac{2x}{\lambda}}}{\W \left(\left(\frac{2}{\lambda}-1\right)e^{\frac{2x}{\lambda}} \right)}\right)^{-\frac{\lambda}{\lambda-2}}\nonumber \\
	&=\frac{\left(\frac{2}{\lambda}-1\right)^{-\frac{\lambda}{\lambda-2}}e^{-\frac{2x}{\lambda-2}}}{\W \left(\left(\frac{2}{\lambda}-1\right)e^{\frac{2x}{\lambda}} \right)^{-\frac{\lambda}{\lambda-2}}}.\label{term2}
	\end{align}
	Next consider the term $e^{\frac{\p_\varphi(x)\lambda -2x}{2(\lambda-1)}}\p_\varphi(x)\lambda$. Using \eqref{term0} and \eqref{term2}, we may rewrite it as follows:
	\begin{align}
	e^{\frac{\p_\varphi(x)\lambda -2x}{2(\lambda-1)}}\p_\varphi(x)\lambda =&\resizebox{.75\textwidth}{!}{$%
		 \frac{\left(\frac{2}{\lambda}-1\right)^{\frac{-\lambda}{\lambda-2}}e^{\frac{-2x}{\lambda-2}}}{\W \left(\left(\frac{2}{\lambda}-1\right)e^{\frac{2x}{\lambda}} \right)^{\frac{-\lambda}{\lambda-2}}} \left(- \frac{(\frac{2}{\lambda}-2)\W \left(\left(\frac{2}{\lambda}-1 \right)e^{\frac{2x}{\lambda}}   \right)}{\frac{2}{\lambda}-1}+\frac{2x}{\lambda}\right) \lambda \nonumber$%
		}\\
	=&\resizebox{.75\textwidth}{!}{$%
		-\left(\frac{2}{\lambda}-1\right)^{\frac{-\lambda}{\lambda-2}-1}\left( \frac{2}{\lambda}-2\right)e^{-\frac{2x}{\lambda-2}}\W \left(\left(\frac{2}{\lambda}-1  \right)e^{\frac{2x}{\lambda}}  \right)^{\left(1+\frac{\lambda}{\lambda-2}\right)}\lambda \nonumber$%
	}\\
	+&\frac{2x\left(\frac{2}{\lambda}-1\right)^{-\frac{\lambda}{\lambda-2}}e^{-\frac{2x}{\lambda-2}}}{\W \left(\left(\frac{2}{\lambda}-1\right)e^{\frac{2x}{\lambda}} \right)^{-\frac{\lambda}{\lambda-2}}}.\label{term3}
	\end{align}
	Using \eqref{term1}, \eqref{term2}, and \eqref{term3}, we may have that the statement \eqref{WTS} --- which we want to show --- is equivalent to:
	\begin{align*}
	0&=-\left(\frac{2}{\lambda}-1\right)^{\frac{-\lambda}{\lambda-2}-1}\left( \frac{2}{\lambda}-2\right)e^{-\frac{2x}{\lambda-2}}\W \left(\left(\frac{2}{\lambda}-1  \right)e^{\frac{2x}{\lambda}}  \right)^{\left(1+\frac{\lambda}{\lambda-2}\right)}\lambda\\
	&+\frac{2x\left(\frac{2}{\lambda}-1\right)^{-\frac{\lambda}{\lambda-2}}e^{-\frac{2x}{\lambda-2}}}{\W \left(\left(\frac{2}{\lambda}-1\right)e^{\frac{2x}{\lambda}} \right)^{-\frac{\lambda}{\lambda-2}}}+2(1-\lambda)\left(\frac{\left(\frac{2}{\lambda}-1\right)e^{\frac{2x}{\lambda}}}{\W \left(\left(\frac{2}{\lambda}-1\right)e^{\frac{2x}{\lambda}} \right)}\right)^{-\frac{\frac{2}{\lambda}-2}{\frac{2}{\lambda}-1}} e^{\frac{2x}{\lambda}}\\ &-2x\frac{\left(\frac{2}{\lambda}-1\right)^{-\frac{\lambda}{\lambda-2}}e^{-\frac{2x}{\lambda-2}}}{\W \left(\left(\frac{2}{\lambda}-1\right)e^{\frac{2x}{\lambda}} \right)^{-\frac{\lambda}{\lambda-2}}}.
	\end{align*}
	Now the positive and negative terms of the form
	\begin{equation*}
	2x\frac{\left(\frac{2}{\lambda}-1\right)^{-\frac{\lambda}{\lambda-2}}e^{-\frac{2x}{\lambda-2}}}{\W \left(\left(\frac{2}{\lambda}-1\right)e^{\frac{2x}{\lambda}} \right)^{-\frac{\lambda}{\lambda-2}}}
	\end{equation*}
	cancel each other out, leaving us with
	\begin{align}
	0&=-\left(\frac{2}{\lambda}-1\right)^{\frac{-\lambda}{\lambda-2}-1}\left( \frac{2}{\lambda}-2\right)e^{-\frac{2x}{\lambda-2}}\W \left(\left(\frac{2}{\lambda}-1  \right)e^{\frac{2x}{\lambda}}  \right)^{\left(1+\frac{\lambda}{\lambda-2}\right)}\lambda \nonumber \\
	&+2(1-\lambda)\left(\frac{\left(\frac{2}{\lambda}-1\right)e^{\frac{2x}{\lambda}}}{\W \left(\left(\frac{2}{\lambda}-1\right)e^{\frac{2x}{\lambda}} \right)}\right)^{-\frac{\frac{2}{\lambda}-2}{\frac{2}{\lambda}-1}} e^{\frac{2x}{\lambda}}. \nonumber
	\end{align}
	Rewriting and simplifying, we obtain
	\begin{align}
	0&=-\left(\frac{2}{\lambda}-1\right)^{\frac{2(1-\lambda)}{\lambda-2}}2\left( 1-\lambda\right)e^{-\frac{2x}{\lambda-2}}\W \left(\left(\frac{2}{\lambda}-1  \right)e^{\frac{2x}{\lambda}}  \right)^{\left(1+\frac{\lambda}{\lambda-2}\right)} \nonumber \\
	&+2(1-\lambda)\left(\frac{2}{\lambda}-1\right)^{\frac{2(1-\lambda)}{\lambda-2}}e^{\frac{-2x}{\lambda-2}}    \W \left(\left(\frac{2}{\lambda}-1\right)e^{\frac{2x}{\lambda}} \right)^{1+\frac{\lambda}{\lambda-2}}, \nonumber
	\end{align}
	which is true, completing the result.
\end{proof}

\begin{theorem}\label{thm:entropyenergy}
	Let $f_0,f_1$ be defined as in Lemma~\ref{lem:leftandrightconjs} and let $\p$ be defined as in Lemma~\ref{lem:prox}. Then
	\begin{align}\label{proximalaverage}
	f_\lambda(x) &= \frac{\lambda-1}{2}\W \left(e^{\left(\frac{2\lambda-2}{2-\lambda}\W \left( \left(\frac{2}{\lambda}-1  \right)e^{\frac{2x}{\lambda}} \right)+\frac{2x}{\lambda} \right)} \right)^2 - \frac{x^2(\lambda-2)}{2\lambda}\\
	&+\resizebox{.85\textwidth}{!}{$%
		(\lambda-1) \W \left(e^{\left(\frac{2\lambda-2}{2-\lambda}\W \left( \left(\frac{2}{\lambda}-1  \right)e^{\frac{2x}{\lambda}} \right)+\frac{2x}{\lambda} \right)} \right) - \frac{\lambda(\lambda-1)^2}{(\lambda-2)^2}\W \left( \left(\frac{2}{\lambda}-1  \right)e^{\frac{2x}{\lambda}} \right)^2 . \nonumber $%
	}
	\end{align}
\end{theorem}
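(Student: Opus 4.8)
The plan is to observe that the claimed formula for $f_\lambda$ is, once the energy $\tfrac12(\cdot)^2$ is subtracted and the result rearranged, nothing other than the formula for $\varphi$ already supplied by Lemma~\ref{lem:prox}; so the theorem amounts to unwinding the definition of the proximal average and then collecting terms.

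First I would expand the definition. With $f_0 = \ent$ and $f_1 = \tfrac12(\cdot)^2$, Definition~\ref{def:pav} gives
\[
f_\lambda = \Big((1-\lambda)\big(f_0 + \tfrac12(\cdot)^2\big)^{*} + \lambda\big(f_1 + \tfrac12(\cdot)^2\big)^{*}\Big)^{*} - \tfrac12(\cdot)^2 ,
\]
and by Lemma~\ref{lem:leftandrightconjs}\ref{leftconj}--\ref{rightconj} the expression inside the outer conjugate is exactly $(1-\lambda)\big(\tfrac12\W(e^{(\cdot)})(\W(e^{(\cdot)})+2)\big) + \lambda\,\tfrac14(\cdot)^2$, whose conjugate is precisely the function $\varphi$ of Lemma~\ref{lem:prox}. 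Hence $f_\lambda = \varphi - \tfrac12(\cdot)^2$, and Lemma~\ref{lem:prox} furnishes both the closed form~\eqref{term0} for $\p_\varphi$ and the explicit value
\[
\varphi(x) = x\,\p_\varphi(x) - (1-\lambda)\Big(\tfrac12\W(e^{\p_\varphi(x)})\big(\W(e^{\p_\varphi(x)})+2\big)\Big) - \tfrac{\lambda}{4}\,\p_\varphi(x)^2 .
\]
It then remains only to substitute this into $f_\lambda = \varphi - \tfrac12(\cdot)^2$ and simplify to the right-hand side of~\eqref{proximalaverage}.

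To simplify I would first rewrite the coefficient appearing in~\eqref{term0}, namely $-\tfrac{\frac2\lambda-2}{\frac2\lambda-1}$, as $\tfrac{2\lambda-2}{2-\lambda}$, so that, writing $W := \W\!\big((\tfrac2\lambda-1)e^{2x/\lambda}\big)$, we have $\p_\varphi(x) = \tfrac{2\lambda-2}{2-\lambda}\,W + \tfrac{2x}{\lambda}$. In particular $e^{\p_\varphi(x)}$ is literally the argument of the outer $\W$ occurring in~\eqref{proximalaverage}, so the two terms of $\varphi$ that carry $\W(e^{\p_\varphi(x)})$ reproduce the summands $\tfrac{\lambda-1}{2}\W(\cdots)^2$ and $(\lambda-1)\W(\cdots)$ of~\eqref{proximalaverage} verbatim (using $-(1-\lambda)=\lambda-1$). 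What is left is the algebraic identity
\[
x\,\p_\varphi(x) - \tfrac{\lambda}{4}\,\p_\varphi(x)^2 - \tfrac12 x^2 = -\tfrac{(\lambda-2)x^2}{2\lambda} - \tfrac{\lambda(\lambda-1)^2}{(\lambda-2)^2}\,W^2 .
\]
Expanding $\p_\varphi(x)^2 = \big(\tfrac{2\lambda-2}{2-\lambda}\big)^2 W^2 + \tfrac{4}{\lambda}\cdot\tfrac{2\lambda-2}{2-\lambda}\,xW + \tfrac{4x^2}{\lambda^2}$, one sees that the mixed term $\tfrac{2\lambda-2}{2-\lambda}\,xW$ surviving in $\tfrac{\lambda}{4}\p_\varphi(x)^2$ cancels the mixed term of $x\,\p_\varphi(x)$; the pure $x^2$ contributions $\tfrac{2}{\lambda}x^2$, $-\tfrac1\lambda x^2$, and $-\tfrac12 x^2$ combine to $\tfrac{2-\lambda}{2\lambda}x^2 = -\tfrac{(\lambda-2)x^2}{2\lambda}$; and the $W^2$ coefficient is $-\tfrac{\lambda}{4}\big(\tfrac{2\lambda-2}{2-\lambda}\big)^2 = -\tfrac{\lambda(\lambda-1)^2}{(\lambda-2)^2}$, exactly as claimed.

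I do not expect a genuine obstacle: granted Lemmas~\ref{lem:leftandrightconjs} and~\ref{lem:prox}, the statement is a verification. The one point worth a remark --- already implicit in Lemma~\ref{lem:prox} --- is that the stationary point exhibited there really is the supremum defining $\varphi(x)$; this holds because $y \mapsto \tfrac12\W(e^{y})(\W(e^{y})+2) = \W(e^{y}) + \tfrac12\W(e^{y})^2$ has derivative $\W(e^{y})$ by Proposition~\ref{prop:Wderivatives}\ref{W6}, an increasing function of $y$, hence is convex, so $y\mapsto xy - (1-\lambda)(\cdots) - \tfrac{\lambda}{4}y^2$ is strictly concave and its unique critical point $\p_\varphi(x)$ is its maximizer. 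In practice I would carry out the exponent bookkeeping of the substitution with a computer algebra system, in the spirit of the paper, to guard against sign slips.
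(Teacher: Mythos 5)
Your proposal is correct and follows exactly the paper's route: identify $f_\lambda=\varphi-\tfrac12(\cdot)^2$ via Definition~\ref{def:pav} and Lemma~\ref{lem:leftandrightconjs}, then substitute the closed form of $\p_\varphi$ from Lemma~\ref{lem:prox} and simplify. The only difference is that you carry out explicitly the ``great deal of arithmetic'' the paper elides (and your coefficient checks $-\tfrac{\lambda}{4}\bigl(\tfrac{2\lambda-2}{2-\lambda}\bigr)^2=-\tfrac{\lambda(\lambda-1)^2}{(\lambda-2)^2}$ and $\tfrac{2}{\lambda}-\tfrac{1}{\lambda}-\tfrac12=-\tfrac{\lambda-2}{2\lambda}$ are right), plus a welcome remark on why the stationary point is indeed the maximizer.
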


\begin{proof}
	Using Definition~\ref{def:pav} together with Lemma~\ref{lem:leftandrightconjs} we have that
	\begin{align*}
	f_\lambda = \left((1-\lambda)\left(\frac12 \W(e^{(\cdot)})(\W(e^{(\cdot)})+2)\right) + \lambda\left(\frac14(\cdot)^2 \right) \right)^* - \frac12 (\cdot)^2.
	\end{align*}
	This is just
	\begin{equation*}
	f_{\lambda} = \varphi - \frac{1}{2}(\cdot)^2
	\end{equation*}
	where $\varphi,\p_\varphi$ are as defined as in Lemma~\ref{lem:prox}. From this, we have that
	\begin{equation*}
	f_\lambda(x) = x\p_\varphi(x)- (1-\lambda)\left(\frac{1}{2}\W(e^{\p_\varphi(x)})(\W(e^{\p_\varphi(x)})+2) \right) -\frac{\lambda}{4}\p_\varphi(x)^2-\frac{1}{2}x^2,
	\end{equation*}
	which simplifies, by a great deal of arithmetic, to the form we see in \eqref{proximalaverage}, completing the result.
\end{proof}

Worthy of note is that this result (in particular, Lemma~\ref{lem:prox}) could not be computed by the \emph{SCAT} package. Neither could \emph{Maple} find the root of \eqref{functiontosolve} on its own. The solution was discovered by choosing specific values for $\lambda$, solving \eqref{functiontosolve}, observing, and finally guessing the more general pattern. This serves as an example of the kind of fruitful human-machine collaboration Borwein \& Lindstrom sought to emphasize in \cite{BL2016}.

Within minutes of choosing correctly we ``knew'' the answer, because we could visually read off the functions $f_0$ and $f_1$ at left in Figure~\ref{fig:entropy_energy}, even though a proof took much longer.

\begin{figure}
	\begin{center}
		%\vspace*{\fill}
		\includegraphics[height=5cm]{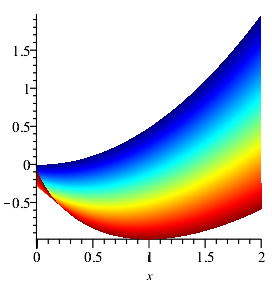}
		%\vspace*{\fill}
		\includegraphics[height=5cm]{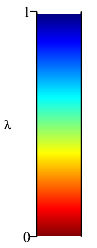}
		%\vspace*{\fill}
		\includegraphics[height=5cm]{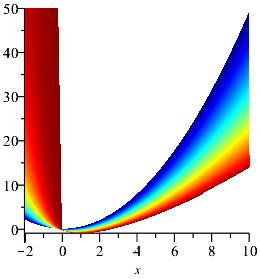}
		%\vspace*{\fill}
	\end{center}

	\caption{$f_\lambda$ from Theorem~\ref{thm:entropyenergy}} \label{fig:entropy_energy}
\end{figure}

\subsection{Form and Proof for $f_\lambda^*$}\label{ss:flambdastar}

While the complicated nature of $f_\lambda$ precludes computing its conjugate in the usual way, we can still compute it using the convenient identity \eqref{conjugacy} found in  Remark~\ref{rem:conjugacy}. Specifically, since $f_\lambda^* = \mathcal{P}(f_0^*,\lambda,f_1^*)$, we can forget, for the moment, about $f_\lambda$ and instead compute $\mathcal{P}(f_0^*,\lambda,f_1^*)$ in the same way that we computed $f_\lambda$, directly from Definition~\ref{def:pav}.

\begin{remark}Let $f_0 = \ent$ and $f_1 = \frac{1}{2}|\cdot|^2$. Then
	\begin{equation*}
	f_0^* = \exp \quad \text{and} \quad f_1^* = \frac{1}{2}|\cdot|^2 = f_1.
	\end{equation*}
	These are both well-known results.
\end{remark}

\begin{lemma}\label{lem:leftandrightconjs*}Let $f_0 := \ent$ and $f_1 := \frac12 |\cdot|^2$. Then we have the following
	\begin{enumerate}[label=\emph{(\roman*)}]
		\item \label{leftconj*} $\left(f_0^* + \frac{1}{2}|\cdot|^2 \right)^* = \frac{1}{2}|\cdot|^2-\W(e^{(\cdot)})-\frac{1}{2}\W(e^{(\cdot)})^2$
		\item \label{rightconj*} $\left(f_1^* + \frac{1}{2}|\cdot|^2 \right)^* = \frac{1}{4}|\cdot|^2$
	\end{enumerate}
\end{lemma}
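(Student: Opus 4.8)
The plan is to proceed as in the proof of Lemma~\ref{lem:leftandrightconjs}, using the two facts recorded just above --- namely $f_0^* = \exp$ and $f_1^* = f_1 = \tfrac12|\cdot|^2$ --- so that \ref{leftconj*} reduces to a conjugate computation for $\exp + \tfrac12|\cdot|^2$ and \ref{rightconj*} to one for $\tfrac12|\cdot|^2 + \tfrac12|\cdot|^2$.

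For \ref{leftconj*}, by definition
$$\left(f_0^* + \tfrac12|\cdot|^2\right)^*(x) = \sup_{y\in\R}\left\{xy - e^y - \tfrac12 y^2\right\}.$$
The function $y\mapsto xy - e^y - \tfrac12 y^2$ is strictly concave and tends to $-\infty$ as $|y|\to\infty$, so the supremum is attained at the unique critical point, where $x - e^y - y = 0$. The first real step is to solve this in closed form: setting $u := x - y$ rewrites it as $e^{x-u} = u$, i.e.\ $u e^u = e^x$, whence $u = \W(e^x)$ and the maximizer is $y = x - \W(e^x)$; note that at this point $e^y = e^{x-u} = u = \W(e^x)$. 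Substituting back and using $e^y = \W(e^x)$,
$$\left(f_0^* + \tfrac12|\cdot|^2\right)^*(x) = x\bigl(x-\W(e^x)\bigr) - \W(e^x) - \tfrac12\bigl(x-\W(e^x)\bigr)^2,$$
and expanding the square makes the $x\W(e^x)$ terms cancel and collapses the right-hand side to $\tfrac12 x^2 - \W(e^x) - \tfrac12\W(e^x)^2$, which is exactly \ref{leftconj*}.

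For \ref{rightconj*}, since $f_1^* = \tfrac12|\cdot|^2$ we have $f_1^* + \tfrac12|\cdot|^2 = |\cdot|^2$, and $\bigl(|\cdot|^2\bigr)^* = \tfrac14|\cdot|^2$ by the elementary rule $(c|\cdot|^2)^* = \tfrac1{4c}|\cdot|^2$; equivalently, this is just Lemma~\ref{lem:leftandrightconjs}\ref{rightconj} since $f_1^* = f_1$.

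A shorter route to \ref{leftconj*}, avoiding any re-optimization, is to invoke the identity $\left(g^* + \tfrac12|\cdot|^2\right)^* = \tfrac12|\cdot|^2 - \left(g + \tfrac12|\cdot|^2\right)^*$, valid for every $g\in\mathcal F$: each of the two conjugates equals $\tfrac12|\cdot|^2$ minus the corresponding Moreau envelope, and the two Moreau envelopes sum to $\tfrac12|\cdot|^2$ by Moreau's decomposition. Taking $g = f_0$ and substituting Lemma~\ref{lem:leftandrightconjs}\ref{leftconj} in the form $\tfrac12\W(e^x)\bigl(\W(e^x)+2\bigr) = \W(e^x) + \tfrac12\W(e^x)^2$ yields \ref{leftconj*} in one line. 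Either way, the only points requiring attention are the attainment and finiteness of the supremum (so that the first-order condition is legitimate) and the final algebraic simplification; there is no genuine obstacle here, and this computation is considerably lighter than the one carried out for $f_\lambda$ in Lemma~\ref{lem:prox}.
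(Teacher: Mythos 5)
Your main computation is correct and is essentially the paper's own proof: the same first-order condition $e^y=x-y$, the same substitution $u=x-y$ (the paper calls it $\gamma$) giving $ue^u=e^x$, hence $y=x-\W(e^x)$ and $e^y=\W(e^x)$, followed by the same back-substitution and simplification; part \ref{rightconj*} is likewise dispatched as elementary in both. Your alternative route is a genuine addition worth noting: the identity $\left(g^*+\tfrac12|\cdot|^2\right)^*=\tfrac12|\cdot|^2-\left(g+\tfrac12|\cdot|^2\right)^*$ is correct (both sides equal the Moreau envelope of $g$, by Moreau's decomposition), and applying it with $g=\ent$ turns Lemma~\ref{lem:leftandrightconjs}\ref{leftconj} directly into \ref{leftconj*} with no new optimization; this buys a one-line derivation and makes transparent why the two conjugates in Lemmas~\ref{lem:leftandrightconjs} and \ref{lem:leftandrightconjs*} sum to $\tfrac12|\cdot|^2$, at the cost of invoking Moreau decomposition rather than keeping the argument self-contained and parallel to the later, harder computations in Lemmas~\ref{lem:prox} and \ref{lem:prox*}.
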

\begin{proof}
	\ref{leftconj*} By definition,
	\begin{equation}\label{innerterm}
	\left(f_0^* + \frac{1}{2}|\cdot|^2 \right)^*(x) = \underset{y \in {\small \R}}{\sup}\left\{xy-f_0^*(y) -\frac{1}{2}y^2 \right\}
	\end{equation}
	Differentiating the inner term with respect to $y$ and setting equal to zero, we have that the supremum is obtained when $y$ satisfies $e^y=x-y$. We will solve for $y$. Here the Wikipedia page about Lambert $\W$ suggests a handy method \cite{wikiW}. Let $\gamma = x-y$. Then $e^y = \gamma$ and so
	\begin{equation*}
	\gamma e^\gamma = e^y e^{x-y} = e^x
	\end{equation*}
	and so we have $\gamma = \W(e^x)$. Thus we have $e^y = \W(e^x)$.
	Taking the log of both sides,
	\begin{equation*}
	y = \log(\W(e^x)) = \log(e^x)-\W(e^x) = x-\W(e^x).
	\end{equation*}
	Using this as the $y$ value for the inner term in \eqref{innerterm}, we obtain
	\begin{equation*}
	\left(f_0^* + \frac{1}{2}|\cdot|^2 \right)^*(x) = x(x-\W(e^x))-\exp(x-\W(e^x)) -\frac{1}{2}\left(x-\W(e^x)\right)^2,
	\end{equation*}
	which simplifies to the form in \ref{leftconj*}.\newline

	\ref{rightconj*}: This is a well known result and may be obtained by simple arithmetic.
\end{proof}

\begin{lemma}\label{lem:prox*}
	Let $\theta$ be defined as follows
	\begin{align*}
	\theta &:= \left((1-\lambda)\left(\frac{1}{2}(\cdot)^2-\W(e^{(\cdot)})-\frac{1}{2}\W(e^{(\cdot)})^2 \right) + \lambda\left(\frac14(\cdot)^2 \right) \right)^*
	\end{align*}

	Then it holds that
	\begin{equation}\label{term0*}
	\p_\theta (x) = \left( \frac{2}{\lambda}-2\right) \W \left( \frac{\lambda e^{\frac{2x}{2-\lambda}}}{2-\lambda} \right)+\frac{2x}{2-\lambda}
	\end{equation}
	and so we may write
	\begin{equation*}
	\theta(x) = x\p_\theta(x)- (1-\lambda)\left(\frac{1}{2}\p_\theta(x)^2-\W(e^{\p_\theta(x)})-\frac{1}{2}\W(e^{\p_\theta(x)})^2 \right) -\frac{\lambda}{4}\p_\theta(x)^2.
	\end{equation*}
\end{lemma}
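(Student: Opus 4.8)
The plan is to follow the template of Lemma~\ref{lem:prox}: expand $\theta$ via the definition of the Fenchel conjugate, extract the first-order optimality condition for the inner supremum, and then verify by direct substitution that the asserted $\p_\theta(x)$ satisfies that condition.

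First I would write, from the definition of the conjugate,
\[
\theta(x) = \sup_{y \in \R}\Bigl\{ xy - (1-\lambda)\Bigl(\tfrac12 y^2 - \W(e^y) - \tfrac12 \W(e^y)^2\Bigr) - \tfrac{\lambda}{4} y^2 \Bigr\}.
\]
By Lemma~\ref{lem:leftandrightconjs*}\ref{leftconj*} the function in the large parentheses is the conjugate $\bigl(f_0^* + \frac12|\cdot|^2\bigr)^*$, hence convex, so the integrand is concave in $y$; since it is also coercive (the term $-\frac{\lambda}{4}y^2$ forces it to $-\infty$ as $|y|\to\infty$), the supremum is attained at the unique critical point. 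Differentiating in $y$ and invoking Proposition~\ref{prop:Wderivatives}\ref{W6}, that is $\frac{d}{dy}\bigl(\W(e^y)+\frac12\W(e^y)^2\bigr) = \W(e^y)$, the optimality condition collapses to
\[
x - \tfrac{2-\lambda}{2}\,y + (1-\lambda)\,\W(e^y) = 0 ,
\qquad\text{equivalently}\qquad \W(e^y) = \frac{\tfrac{2-\lambda}{2}y - x}{1-\lambda}.
\]

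Next I would substitute the candidate $y = \p_\theta(x) = \bigl(\tfrac2\lambda - 2\bigr)\W(z) + \tfrac{2x}{2-\lambda}$, where $z := \frac{\lambda e^{2x/(2-\lambda)}}{2-\lambda}$. Since $\lambda \in\, ]0,1[$ we have $z>0$, so $\W(z)>0$ and the principal-branch equivalence $\W(a)=b \iff be^b=a$ is in force. A one-line computation gives $\tfrac{2-\lambda}{2}\p_\theta(x) - x = \frac{(2-\lambda)(1-\lambda)}{\lambda}\W(z)$, so the optimality condition is equivalent to the claim $\W(e^{\p_\theta(x)}) = \tfrac{2-\lambda}{\lambda}\W(z)$, which by the $\W$-characterization is in turn equivalent to $\tfrac{2-\lambda}{\lambda}\W(z)\exp\!\bigl(\tfrac{2-\lambda}{\lambda}\W(z)\bigr) = e^{\p_\theta(x)}$. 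Expanding the right-hand side with $e^{2x/(2-\lambda)} = \tfrac{2-\lambda}{\lambda}\W(z)e^{\W(z)}$ (which is just $z = \W(z)e^{\W(z)}$ rearranged) and the numerical identity $\tfrac{2(1-\lambda)}{\lambda} + 1 = \tfrac{2-\lambda}{\lambda}$, both sides become $\tfrac{2-\lambda}{\lambda}\W(z)\exp\!\bigl(\tfrac{2-\lambda}{\lambda}\W(z)\bigr)$, which establishes \eqref{term0*}. The closed form for $\theta(x)$ then follows at once by plugging $y = \p_\theta(x)$ back into the bracketed expression, i.e.\ directly from the defining property of $\p$.

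As in Lemma~\ref{lem:prox}, the genuinely laborious part is the bookkeeping in this substitution step --- tracking the various powers of $\W(z)$ and of $\tfrac{2-\lambda}{\lambda}$ through the nested exponentials --- and this is exactly where computer algebra earns its keep; the conceptual work is done entirely by the derivative identity \ref{W6} and the elementary characterization of $\W$.
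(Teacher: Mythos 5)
Your proposal is correct, and it follows the same skeleton as the paper's proof: expand $\theta$ via the conjugate, derive the first-order condition \eqref{exprmanual*} using the derivative identity \ref{W6}, and verify by substitution that the asserted $\p_\theta(x)$ satisfies it. Where you genuinely diverge is in how the verification is organized. The paper first eliminates $\W(e^y)$ from the optimality condition via $\W(a)=b \iff be^b=a$, producing the transcendental equation \eqref{functiontosolve*} in $y$ alone, and then substitutes the candidate into that equation; this forces a page of bookkeeping with exponents such as $(2-\lambda)^{(\lambda-2)/\lambda}$ and $\W(z)^{2-2/\lambda}$ before the terms cancel. You instead keep the condition in the form $\W(e^y) = \bigl(\tfrac{2-\lambda}{2}y - x\bigr)/(1-\lambda)$, observe that with $y=\p_\theta(x)$ the right-hand side collapses to $\tfrac{2-\lambda}{\lambda}\W(z)$, and reduce the whole verification to the single identity
\begin{equation*}
\W\bigl(e^{\p_\theta(x)}\bigr) = \tfrac{2-\lambda}{\lambda}\W(z), \qquad z:=\tfrac{\lambda}{2-\lambda}e^{2x/(2-\lambda)},
\end{equation*}
which follows in one line from $z=\W(z)e^{\W(z)}$ and $\tfrac{2(1-\lambda)}{\lambda}+1=\tfrac{2-\lambda}{\lambda}$ (the positivity of $\tfrac{2-\lambda}{\lambda}\W(z)$, which you note, is what licenses the principal-branch equivalence). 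This is a cleaner and more illuminating route to the same conclusion, and as a bonus it makes $e^{\p_\theta(x)}$ and $\W(e^{\p_\theta(x)})$ available in closed form, which is convenient for the subsequent simplification in Theorem~\ref{thm:expenergy}. Your added remark that the inner function is concave (by Lemma~\ref{lem:leftandrightconjs*}\ref{leftconj*}) so that the critical point is the global maximizer is a justification the paper leaves implicit; it is correct and welcome. No gaps.
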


\begin{proof}
	Now by definition
	\begin{equation*}
	\theta(x) = \underset{y \in \R}{\sup}\left \{xy- (1-\lambda)\left(\frac{1}{2} y^2-\W(e^{y})-\frac{1}{2}\W(e^{y})^2 \right) -\frac{\lambda}{4} y^2 \right \},
	\end{equation*}
	which simplifies to
	\begin{equation*}
	\theta(x) = \underset{y \in \R}{\sup}\{xy+\frac{1}{2}(1-\lambda)\W(e^y)^2 +(1-\lambda)\W(e^y)+\frac{1}{4}(\lambda-2)y^2 \}.
	\end{equation*}
	Differentiating the inner term with respect to $y$ and setting equal to zero, we obtain
	\begin{equation}\label{exprmanual*}
	(1-\lambda)\W \left(e^y \right)+\left( \frac{1}{2}\lambda-1\right)y+x=0
	\end{equation}
	We will show that \eqref{exprmanual*} is true if $y=\p_\theta(x)$. First we will rewrite \eqref{exprmanual*} using the fact that $\W(a)=b$ if and only if $be^b = a$ which allows us to remove the $\W(e^y)$ term as follows:
	\begin{align*}
	\W(e^y)&=\frac{\left(1-\frac{\lambda}{2} \right)y-x}{1-\lambda}\\
	\text{and so}\quad e^y&=\left(\frac{\left(1-\frac{\lambda}{2} \right)y-x}{1-\lambda}\right) e^{\left(\frac{\left(1-\frac{\lambda}{2} \right)y-x}{1-\lambda} \right)}\\
	\text{which simplifies to}\quad 0 & = (\lambda y+2x-2y)e^{\left(\frac{(\lambda-2)y+2x}{2\lambda-2} \right)}-2e^y(\lambda-1).
	\end{align*}
	This is the form returned by \emph{Maple}. We further consolidate $y$ terms as follows,
	\begin{equation}\label{functiontosolve*}
	0=\big((\lambda-2)y+2x\big)\left(e^y\right)^{\left(\frac{\lambda-2}{2\lambda-2} \right)}e^{\left(\frac{x}{\lambda-1} \right)}-2(\lambda-1)e^y,
	\end{equation}
	which is the form we will use. We need only to show that
	\begin{equation}\label{WTS*}
	\left((\lambda-2)\p_\theta(x)+2x\right)e^{\left(\frac{x}{\lambda-1} \right)}\left(e^{\p_\theta(x)}\right)^{\left(\frac{\lambda-2}{2\lambda-2} \right)}-2(\lambda-1)e^{\p_\theta(x)}=0.
	\end{equation}
	First consider the term $e^{\p_\theta(x)}$. Since for any $a,b,z$ we have that
	$$e^{aW(z)+b}=(e^{W(z)})^a e^b = \left(\frac{z}{W(z)}\right)^a e^b $$
	we may let
	\begin{equation}
	a:=\left(\frac{2}{\lambda}-2\right), \quad b:=\frac{2x}{2-\lambda}, \quad z:=\frac{\lambda e^{\left(\frac{2x}{2-\lambda} \right)}}{2-\lambda}
	\end{equation}
	and thusly rewrite
	\begin{align}
	e^{\p_\theta(x)}&=\left(\frac{\lambda e^{\left(\frac{2x}{2-\lambda} \right)}}{(2-\lambda)\W \left(\frac{\lambda e^{\left(\frac{2x}{2-\lambda} \right)}}{2-\lambda}\right)} \right)^{\left(\frac{2}{\lambda}-2\right) }e^{\left(\frac{2x}{2-\lambda} \right)}\nonumber \\
	&= \left(2-\lambda \right)^{\left(2-\frac{2}{\lambda}\right)}\lambda^{\left(\frac{2}{\lambda}-2 \right)} e^{\left(\frac{2x}{\lambda}\right)} \W \left(\frac{\lambda e^{\left(\frac{2x}{2-\lambda} \right)}}{2-\lambda}\right)^{\left(2-\frac{2}{\lambda}\right)}.       \label{term1*}
	\end{align}
	From this, we have that
	\begin{align}
	e^{\left(\frac{x}{\lambda-1} \right)}\left(e^{\p(x)}\right)^{\left(\frac{\lambda-2}{2\lambda-2} \right)} &= e^{\left(\frac{x}{\lambda-1} \right)} \left(\left(\frac{\lambda e^{\left(\frac{2x}{2-\lambda} \right)}}{(2-\lambda)\W \left(\frac{\lambda e^{\left(\frac{2x}{2-\lambda} \right)}}{2-\lambda}\right)} \right)^{\left(\frac{2}{\lambda}-2\right) }e^{\left(\frac{2x}{2-\lambda} \right)} \right)^{\frac{\lambda-2}{2\lambda-2}} \nonumber \\
	&=e^{\left(\frac{x}{\lambda-1} \right)} \left(\frac{\lambda e^{\left(\frac{2x}{2-\lambda} \right)}}{(2-\lambda)\W \left(\frac{\lambda e^{\left(\frac{2x}{2-\lambda} \right)}}{2-\lambda}\right)} \right)^{\left(\frac{2-\lambda}{\lambda}\right) }e^{\left(-\frac{x}{\lambda-1} \right)} \nonumber \\
	&= \left(\frac{\lambda e^{\left(\frac{2x}{2-\lambda} \right)}}{(2-\lambda)\W \left(\frac{\lambda e^{\left(\frac{2x}{2-\lambda} \right)}}{2-\lambda}\right)} \right)^{\left(\frac{2-\lambda}{\lambda}\right) } \nonumber \\
	&=(2-\lambda)^{\left(\frac{\lambda-2}{\lambda}\right)}\lambda^{\left(\frac{2-\lambda}{\lambda} \right)}e^{\frac{2x}{\lambda}}\W \left(\frac{\lambda e^{\left(\frac{2x}{2-\lambda} \right)}}{2-\lambda} \right)^{\left(\frac{\lambda-2}{\lambda} \right)}.\label{term2*}
	\end{align}
	Using \eqref{term0*}, \eqref{term1*}, and \eqref{term2*}, we may rewrite \eqref{WTS*} as follows:
	\begin{align*}
	0&=(\lambda-2)\left(\left( \frac{2}{\lambda}-2\right) \W \left( \frac{\lambda e^{\frac{2x}{2-\lambda}}}{2-\lambda} \right)+\frac{2x}{2-\lambda}  \right)    (2-\lambda)^{\left(\frac{\lambda-2}{\lambda}\right)}\lambda^{\left(\frac{2-\lambda}{\lambda} \right)}e^{\frac{2x}{\lambda}}\W \left(\frac{\lambda e^{\left(\frac{2x}{2-\lambda} \right)}}{2-\lambda} \right)^{\left(\frac{\lambda-2}{\lambda} \right)}\\
	&+2x(2-\lambda)^{\left(\frac{\lambda-2}{\lambda}\right)}\lambda^{\left(\frac{2-\lambda}{\lambda} \right)}e^{\frac{2x}{\lambda}}\W \left(\frac{\lambda e^{\left(\frac{2x}{2-\lambda} \right)}}{2-\lambda} \right)^{\left(\frac{\lambda-2}{\lambda} \right)} \\
	&-2(\lambda-1)\left(2-\lambda \right)^{\left(2-\frac{2}{\lambda}\right)}\lambda^{\left(\frac{2}{\lambda}-2 \right)} e^{\left(\frac{2x}{\lambda}\right)} \W \left(\frac{\lambda e^{\left(\frac{2x}{2-\lambda} \right)}}{2-\lambda}\right)^{\left(2-\frac{2}{\lambda}\right)}.\\
	\end{align*}
	The positive and negative terms of the form
	\begin{equation*}
	2x(2-\lambda)^{\left(\frac{\lambda-2}{\lambda}\right)}\lambda^{\left(\frac{2-\lambda}{\lambda} \right)}e^{\frac{2x}{\lambda}}\W \left(\frac{\lambda e^{\left(\frac{2x}{2-\lambda} \right)}}{2-\lambda} \right)^{\left(\frac{\lambda-2}{\lambda} \right)}
	\end{equation*}
	cancel each other out, leaving
	\begin{align*}
	0&=(\lambda-2)\left(\left( \frac{2}{\lambda}-2\right) \W \left( \frac{\lambda e^{\frac{2x}{2-\lambda}}}{2-\lambda} \right)  \right)    (2-\lambda)^{\left(\frac{\lambda-2}{\lambda}\right)}\lambda^{\left(\frac{2-\lambda}{\lambda} \right)}e^{\frac{2x}{\lambda}}\W \left(\frac{\lambda e^{\left(\frac{2x}{2-\lambda} \right)}}{2-\lambda} \right)^{\left(\frac{\lambda-2}{\lambda} \right)}\\
	&-2(\lambda-1)\left(2-\lambda \right)^{\left(2-\frac{2}{\lambda}\right)}\lambda^{\left(\frac{2}{\lambda}-2 \right)} e^{\left(\frac{2x}{\lambda}\right)} \W \left(\frac{\lambda e^{\left(\frac{2x}{2-\lambda} \right)}}{2-\lambda}\right)^{\left(2-\frac{2}{\lambda}\right)},
	\end{align*}
	which further simplifies to
	\begin{align*}
	0&=2(\lambda-2)\left( \frac{1}{\lambda}-1\right)    \left(\frac{2-\lambda}{\lambda}\right)^{\left(\frac{\lambda-2}{\lambda}\right)}e^{\left(\frac{2x}{\lambda}\right)}\W \left(\frac{\lambda e^{\left(\frac{2x}{2-\lambda} \right)}}{2-\lambda} \right)^{\left(2-\frac{2}{\lambda} \right)}\\
	&-2(\lambda-1)\left(\frac{2-\lambda}{\lambda} \right)^{\left(2-\frac{2}{\lambda}\right)} e^{\left(\frac{2x}{\lambda}\right)} \W \left(\frac{\lambda e^{\left(\frac{2x}{2-\lambda} \right)}}{2-\lambda}\right)^{\left(2-\frac{2}{\lambda}\right)}.
	\end{align*}
	Finally, $(\lambda-2)\left(\frac{1}{\lambda}-1 \right) = (\lambda-1)\left(\frac{2-\lambda}{\lambda} \right)$ and so the above equation is true, completing the result.
	%\begin{align*}
	%0&=2(\lambda-1)    \left(\frac{2-\lambda}{\lambda}\right)^{\left(2-\frac{2}{\lambda}\right)}e^{\frac{2x}{\lambda}}\W \left(\frac{\lambda e^{\left(\frac{2x}{2-\lambda} \right)}}{2-\lambda} \right)^{\left(2-\frac{2}{\lambda} \right)}\\
	%&-2(\lambda-1)\left(\frac{2-\lambda}{\lambda} \right)^{\left(2-\frac{2}{\lambda}\right)} e^{\left(\frac{2x}{\lambda}\right)} \W \left(\frac{\lambda e^{\left(\frac{2x}{2-\lambda} \right)}}{2-\lambda}\right)^{\left(2-\frac{2}{\lambda}\right)},
	%\end{align*}
\end{proof}

\begin{theorem}\label{thm:expenergy}
	Let $f_0,f_1$ be defined as in Lemma~\ref{lem:leftandrightconjs*}. Then

	\begin{align}\label{proximalaverageconjugate}
	f_\lambda^*(x) &= (1-\lambda)\W \left(e^{\left( \left(\frac{2}{\lambda}-2 \right)\W \left( \frac{\lambda}{2-\lambda} e^{\left(\frac{2x}{2-\lambda} \right)} \right)+\frac{2x}{2-\lambda}    \right)}\right) + \frac{\lambda x^2}{4-2\lambda}\\
	&+ \frac{1}{2}(1-\lambda)\W \left(e^{\left( \left(\frac{2}{\lambda}-2 \right)\W \left( \frac{\lambda}{2-\lambda} e^{\left(\frac{2x}{2-\lambda} \right)} \right)+\frac{2x}{2-\lambda}    \right)}\right)^2\\
	&+ \frac{(\lambda-1)^2(\lambda-2)}{\lambda^2}\W \left( \frac{\lambda}{2-\lambda} e^{\left(\frac{2x}{2-\lambda} \right)} \right)^2. \nonumber
	\end{align}
\end{theorem}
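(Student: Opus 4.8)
The plan is to mimic the proof of Theorem~\ref{thm:entropyenergy}, now on the conjugate side, exploiting the identity $f_\lambda^* = \mathcal{P}(f_0^*,\lambda,f_1^*)$ of Remark~\ref{rem:conjugacy} so that we never have to conjugate the unwieldy expression \eqref{proximalaverage} directly. Combining Definition~\ref{def:pav} with Lemma~\ref{lem:leftandrightconjs*} gives
\[
f_\lambda^* = \left((1-\lambda)\left(\tfrac12(\cdot)^2-\W(e^{(\cdot)})-\tfrac12\W(e^{(\cdot)})^2\right) + \lambda\left(\tfrac14(\cdot)^2\right)\right)^* - \tfrac12(\cdot)^2 = \theta - \tfrac12(\cdot)^2,
\]
where $\theta$ is precisely the function of Lemma~\ref{lem:prox*}. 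Invoking that lemma to express $\theta$ through its selection operator $\p_\theta$, subtracting $\tfrac12 x^2$, and combining the two quadratic-in-$\p_\theta(x)$ contributions, we obtain
\[
f_\lambda^*(x) = \left(x\p_\theta(x) - \tfrac{2-\lambda}{4}\p_\theta(x)^2 - \tfrac12 x^2\right) + (1-\lambda)\W(e^{\p_\theta(x)}) + \tfrac{1-\lambda}{2}\W(e^{\p_\theta(x)})^2.
\]

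Next I would substitute the closed form $\p_\theta(x) = a\,\W(z) + b$ from \eqref{term0*}, where $a = \tfrac2\lambda - 2$, $b = \tfrac{2x}{2-\lambda}$, and $z = \tfrac{\lambda}{2-\lambda}e^{2x/(2-\lambda)}$, into the bracketed polynomial part. Expanding $(a\W(z)+b)$ and $(a\W(z)+b)^2$, the decisive observation is that the coefficient of $\W(z)$ equals $a\bigl(x - \tfrac{2-\lambda}{2}b\bigr)$, which vanishes precisely because of the shift $b = \tfrac{2x}{2-\lambda}$; hence only the constant term $\tfrac{\lambda x^2}{4-2\lambda}$ and the $\W(z)^2$ term $\tfrac{(\lambda-1)^2(\lambda-2)}{\lambda^2}\W(z)^2$ survive from that piece. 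The remaining two terms $(1-\lambda)\W(e^{\p_\theta(x)})$ and $\tfrac{1-\lambda}{2}\W(e^{\p_\theta(x)})^2$ are carried over verbatim, and since $\p_\theta(x)$ is literally the exponent appearing inside the outer $\W$ in \eqref{proximalaverageconjugate}, assembling these pieces reproduces the claimed formula.

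Because the one genuinely clean algebraic fact --- vanishing of the linear $\W(z)$ coefficient --- is already exposed above, the main obstacle is simply error-free bookkeeping: keeping straight the competing powers of $\lambda$ and $2-\lambda$ and the two exponentials $e^{2x/\lambda}$ versus $e^{2x/(2-\lambda)}$ that appear once one rewrites $e^{\p_\theta(x)}$ via the identity $e^{a\W(z)+b} = (z/\W(z))^a e^b$ as in \eqref{term1*}. This is the computation the statement defers to ``a great deal of arithmetic,'' and it is naturally carried out and cross-checked in \emph{Maple}. As independent confirmations I would (i) verify the conjugacy relation $(f_\lambda)^* = f_\lambda^*$ numerically against Theorem~\ref{thm:entropyenergy}, and (ii) check the limiting cases: as $\lambda \to 0^+$ the formula should collapse to $\exp = f_0^*$ and as $\lambda \to 1^-$ to $\tfrac12(\cdot)^2 = f_1^*$, using $\W(u)\to 0$ as $u\to 0^+$ together with $\W(u e^u) = u$.
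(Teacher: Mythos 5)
Your proposal is correct and follows essentially the same route as the paper: it invokes the conjugacy identity to avoid conjugating \eqref{proximalaverage} directly, writes $f_\lambda^* = \theta - \tfrac12(\cdot)^2$ with $\theta$ and $\p_\theta$ from Lemma~\ref{lem:prox*}, and then substitutes the closed form \eqref{term0*}. You additionally make explicit the cancellation the paper dismisses as ``a great deal of arithmetic'' --- the vanishing of the linear $\W(z)$ coefficient --- and the surviving terms you identify match \eqref{proximalaverageconjugate} exactly.
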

\begin{proof}
	Using Definition~\ref{def:pav} together with Lemma~\ref{lem:leftandrightconjs*} we have that
	\begin{align*}
	f_\lambda^* = \left((1-\lambda)\left( \frac{1}{2}|\cdot|^2-\W(e^{(\cdot)})-\frac{1}{2}\W(e^{(\cdot)})^2 \right) + \lambda\left(\frac14(\cdot)^2 \right) \right)^* - \frac12 (\cdot)^2.
	\end{align*}
	This is just
	\begin{equation*}
	f_{\lambda}^* = \theta - \frac{1}{2}(\cdot)^2
	\end{equation*}
	where $\theta,\p_\theta$ are as in Lemma~\ref{lem:prox*}. From this, we obtain
	\begin{align*}
	f_\lambda^*(x) &= x\p_\theta(x)- (1-\lambda)\left(\frac{1}{2}\p_\theta(x)^2-\W(e^{\p_\theta(x)})-\frac{1}{2}\W(e^{\p_\theta(x)})^2 \right) \\
	&-\frac{\lambda}{4}\p_\theta(x)^2-\frac{1}{2}x^2.\nonumber \end{align*}
	This simplifies, by a great deal of arithmetic, to the form we see in \eqref{proximalaverageconjugate}, completing the result.
\end{proof}

Similarly to Theorem~\ref{thm:entropyenergy}, the results admitting Theorem~\ref{thm:expenergy} (in particular, Lemma~\ref{lem:prox*}) could not be obtained through the use of \emph{SCAT} or \emph{Maple} alone because these packages cannot invert \eqref{functiontosolve*}. The solution was again discovered with a method similar to that of Theorem~\ref{thm:entropyenergy}.

Again within minutes of choosing correctly we ``knew'' the answer, because we could visually read off the functions $f_0^*$ and $f_1^*$ in Figure~\ref{fig:exp_energy}, even though a proof took much longer. Figures~\ref{fig:entropy_energy} and \ref{fig:exp_energy} highlight an advantageous characteristic of the proximal average which we provide in the following remark.

\begin{remark}\label{rem:fulldomain}
	Let $f_0,f_1 \in \mathcal{F}$ and $\lambda \in \left]0,1\right[$. Let $f_\lambda := \mathcal{P}\left(f_0,\lambda,f_1 \right).$ Suppose that $f_0$ or $f_1$ has full domain and that $f_0^*$ or $f_1^*$ has full domain. Then the following hold:
	\begin{enumerate}
		\item Both $f_\lambda$ and $f_\lambda^*$ have full domain.
		\item If $f_0$ or $f_1$ is differentiable everywhere, then so is $f_\lambda$.
		\item If $f_0$ or $f_1$ is strictly convex and its Fenchel conjugate has full domain, then $f_\lambda$ is strictly convex.
	\end{enumerate}
	For a proof, see \cite[Theorem 6.2]{BLT2007}.
\end{remark}

Figures~\ref{fig:entropy_energy} and \ref{fig:exp_energy} also illustrate another important difference between the behaviour of limiting cases for the proximal average and for the ordinary average.

\begin{figure}
	\begin{center}
		%\vspace*{\fill}
		\includegraphics[height=5cm]{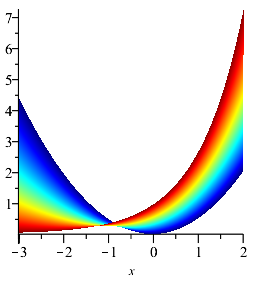}
		%\vspace*{\fill}
		\includegraphics[height=5cm]{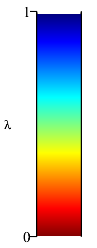}
		%\vspace*{\fill}
		\includegraphics[height=5cm]{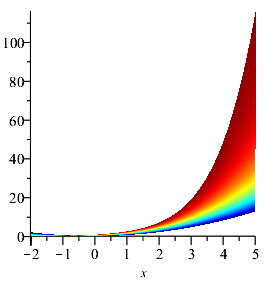}
		%\vspace*{\fill}
	\end{center}

	\caption{$f_\lambda^*$ from Theorem~\ref{thm:expenergy}} \label{fig:exp_energy}
\end{figure}

\begin{remark}[Limiting Cases for Proximal Average]\label{rem:limitingcasesproximal}In juxtaposition with Remark~\ref{originallimitingcases}, we obtain different limiting cases for $f_\lambda$ \eqref{proximalaverage} and $f_\lambda^*$ \eqref{proximalaverageconjugate}. For $f_\lambda$, in the limits at $0,1$, we reobtain $f_0$ and $f_1$ respectively. For $f_\lambda^*$ in the limits at $0,1$, we reobtain $\exp$ and $\frac12|\cdot|^2$. This is more natural, because these are $f_0^*$ and $f_1^*$ respectively, and so our continuous transformation of our functions has corresponded with a continuous transformation of their conjugates.
\end{remark}

The juxtaposition in Remark~\ref{rem:limitingcasesproximal} is both an immediate consequence of and also an excellent illustration of Remark~\ref{rem:continuity}. Where $f_0,f_1 \in \mathcal{F}$ and $(\lambda_n)_{n\in \mathbb{N}}$ is a sequence in $[0,1]$, we have from Remark~\ref{rem:continuity} that
\begin{align*}
\text{if} \;\; \lambda_n \rightarrow 0 \quad \text{then}\quad \mathcal{P}(f_0,\lambda_n,f_1) \overset{\scriptsize \e}{\rightarrow}& \mathcal{P}(f_0,0,f_1) = f_0 \\
\text{and}\quad \mathcal{P}(f_0^*,\lambda_n,f_1^*) \overset{\scriptsize \e}{\rightarrow}& \mathcal{P}(f_0^*,0,f_1^*) = f_0^*\\
\text{and if} \;\; \lambda_n \rightarrow 1 \quad \text{then} \quad \mathcal{P}(f_0,\lambda_n,f_1) \overset{\scriptsize \e}{\rightarrow}& \mathcal{P}(f_0,1,f_1) = f_1\\
\text{and} \quad \mathcal{P}(f_0^*,\lambda_n,f_1^*) \overset{\scriptsize \e}{\rightarrow}& \mathcal{P}(f_0^*,1,f_1^*) = f_1^*,
\end{align*}
which is both elegant and convenient.

\section{Minimizing an Entropy Functional}\label{s:entropyminimization}

In their 2016 paper \cite{BL2016} Borwein \& Lindstrom illustrated the utility of the Lambert $\W$ function by showing how it naturally arises in the problem of minimizing an entropy functional of the form
\begin{align*}
I_f : L^1([0,1]) &\rightarrow \mathbb{R} \\
\text{by} \quad I_f : x &\mapsto \int_{0}^{1} f(x(s)) \d
\end{align*}
where $f$ is a proper, closed convex function. The problem is to minimize $I_f$ subject to finitely many continuous linear constraints of the form
\begin{equation*}
\langle a_k,x \rangle = \int_0^1 a_k(s)x(s) \d = b_k
\end{equation*}
for $1 \leq k \leq n$. We may write this linear equality constraint concisely as
\begin{align*}
A:L^1([0,1]) &\rightarrow \mathbb{R}^n \\
\text{by}\quad A: x & \mapsto \left(\int_0^1 a_1(s)x(s) \d , \dots , \int_0^1 a_n(s)x(s) \right)=b\\
\text{where}\quad b:= A\rho
\end{align*}
where $\rho, a_k \in L^\infty ([0,1])$ and $\rho$ is a given function used to generate the data vector $b$. When $f^*$ is smooth and everywhere finite on the real line, the problem
\begin{equation}\label{primalproblem}
\underset{x\in L^1}{\inf}\{I_f(x) | Ax=b \}
\end{equation}
reduces to solving a finite nonlinear equation
\begin{equation}\label{optimalmultipliers}
\int_0^1 (f^*)' \left(\sum_{j=1}^n \mu_j a_j(s) \right) a_k(s) \d = b_k \quad (1\leq k \leq n).
\end{equation}
A discussion of why this is the case is given in \cite[Section 7]{BL2016} which employs results from Jonathan Borwein's works co-authored with Adrian Lewis \cite{BL2000}, Qiji Zhu \cite{BZ2005}, and Jon Vanderwerff \cite{BV2010}, and Liangjin Yao \cite{BY2014}. The matter of primal attainment and constraint qualification are addressed in Borwein's and Lewis' article \cite{BL1991}, as well as in Lindstrom's PhD dissertation \cite{ScottPhD}.

As was also true in the setting of \cite{BL2016}, this problem and methods discussed in this section are informed by methods found in all of these works, to which we refer the reader for additional information about any underlying theory.

For the function $f$ in the construction of $I_f$, Borwein et al. opted to use $f_t$ from \eqref{originalaverage}, for which the corresponding $f_t^*$ has the form in \eqref{originalconjugate} for $0<t<1$ and $f_t^* = \exp,\; \frac{1}{2}|\cdot|^2$ for $t=0,1$, respectively. For this choice:

\begin{align*}
\text{for }\; 0<t<1, \quad (f_t^*)'(x)&= \frac{1-t}{t}\W \left(\frac{t}{1-t}\exp\left(\frac{x}{1-t}\right) \right)\\
\text{for }\; t=0, \quad (f_t^*)'(x)&=\exp(x)\\
\text{for }\; t=1, \quad (f_t^*)'(x)&= x.
\end{align*}
In the limiting case as $t$ approaches $0$, $(f_t^*)$ approaches $\exp$ while in the limiting case as $t$ approaches $1$ we obtain $\max \{0,x\}$, given the discussion of the limiting cases of $f_t^*$ in Remark~\ref{originallimitingcases}.

\begin{remark}\label{rem:partialdomain}
	Let $f:X \rightarrow \left]-\infty,+\infty \right[$ be proper. Then $f^*:X \rightarrow \left]-\infty,+\infty \right]$ is proper. Let $x,u \in X$. Then
	$$
	u \in \partial f^*(x) \iff f(u)+f^*(x)= \langle x,u \rangle \iff x \in \partial f(u).
	$$
	For details, see \cite[proposition 16.9]{BC}. Thus we have that
	$$
	{\rm ran} (\partial f^*) \subset {\rm dom}(\partial f) \subset {\rm dom}(f).
	$$
	Consequently, for all $x\in H$ we have that:
	$$
	\left(\;\forall t \in \left[0,1\right[\; \right) \quad f_t^*(x) \in {\rm dom}(f_t) = \left [0,\infty \right[.
	$$
\end{remark}

For the function $f$ in the construction of $I_f$, we consider $f_\lambda$ from \eqref{proximalaverage}, for which the corresponding $f_\lambda^*$ has the form in \eqref{proximalaverageconjugate} for $0<\lambda<1$ and $f_\lambda^* = \exp ,\; \frac{1}{2}|\cdot|^2$ for $\lambda=0,1$ respectively as explained in  Remark~\ref{rem:limitingcasesproximal} and as follows from Theorem~\ref{thm:expenergy} by differentiation. For this choice:

\begin{align*}
(f_0^*)'(x)&=\exp(x)\\
(f_1^*)'(x)&= x\\
\text{and for}&\;\; 0<\lambda<1,\\
(f_\lambda^*)'(x)&=\frac{1}{1+\omega(x)}\Bigg(\frac{2(1-\lambda)}{\lambda}\left(\omega(x)-\frac{\lambda}{\lambda-2} \right) \W \left(e^{\left(\frac{2}{\lambda}-2 \right)\omega(x)+\frac{2x}{2-\lambda}} \right)\\
&-\frac{4(\lambda-1)^2}{\lambda^2}\omega(x)^2 + \frac{\lambda x}{2-\lambda}\omega(x) +\frac{x\lambda}{2-\lambda} \Bigg)\\
\text{where }\quad \omega(x)&=\W \left(\frac{\lambda}{2-\lambda}e^{\frac{2x}{2-\lambda}} \right).
\end{align*}

\begin{figure}
	\begin{center}
		%\vspace*{\fill}
		\includegraphics[height=5cm]{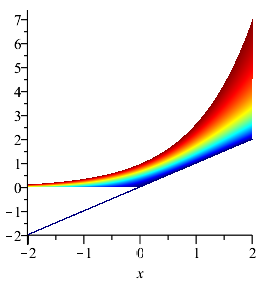}
		%\vspace*{\fill}
		\includegraphics[height=5cm]{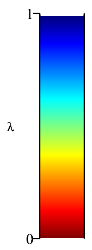}
		%\vspace*{\fill}
		\includegraphics[height=5cm]{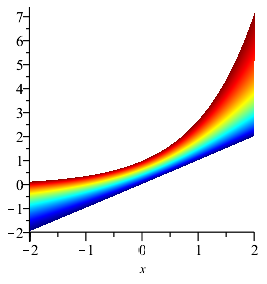}
		%\vspace*{\fill}
	\end{center}

	\caption{$(f_t^*)'$ (left) and $(f_\lambda^*)'$ (right)} \label{fig:fstarprime}
\end{figure}

The functions $(f_t^*)'$ and $(f_\lambda^*)'$ may be seen in Figure~\ref{fig:fstarprime}. Figure~\ref{fig:fstarprime} also serves to highlight one of the consequences of Remark~\ref{rem:fulldomain} in our case.

In juxtaposition with $f_t$ which takes the value infinity for all negative real values, $f_\lambda$ has full domain because $f_1$ has full domain. Consequently the conjugate $f_\lambda^*$ of $f_\lambda$ decreases on part of its domain for values of $\lambda \in ]0,1]$; this is in contrast with the conjugate $f_t^*$ of $f_t$ which is nondecreasing except for the case $t=1$. As a result, the image of $(f_\lambda^*)'$ contains negative numbers for $\lambda \in \left]0,1\right]$ while the image of $(f_t^*)'$ contains negative numbers only for $t=1$. In terms of Remark~\ref{rem:partialdomain}, $f_\lambda$ differs from $f_t$ in the sense that
$$
\left(\forall x \in \mathcal{H} \right)\quad \left(\forall \lambda \in \left]0,1 \right] \right)\quad f_\lambda^*(x) \in {\rm dom}(f_\lambda) = \left]-\infty,\infty \right[.
$$

\begin{remark}\label{rem:error}
	In their original article \cite{BL2016}, the authors have labelled solutions computed for the \emph{limiting} case $\underset{t\rightarrow 1}{\lim}(f_t^*)' = \max \{\cdot,0 \}$ with the label $t=1$; however, $(f_0^*)'$ is actually just the identity $x \mapsto x$. This labelling confusion does not change any of the key results of the paper; it affects only computed examples.
\end{remark}

Where $\mu_1,\dots,\mu_n$ are the optimal multipliers in \eqref{optimalmultipliers}, the primal solution $x_\lambda$ to the primal problem \eqref{primalproblem} is then given by
\begin{equation*}
x_\lambda(s) = \left(f_\lambda^*\right)' \left(\sum_{j=1}^n \mu_j a_j(s) \right)
\end{equation*}

A key difference between our setting and that of \cite{BL2016} is then immediately apparent: for $t\ne 1$, the primal solutions when optimizing with the conventional average $f_t$ could not take on negative values. When instead using the proximal average, $f_\lambda$, the primal solutions may take on negative values so long as $\lambda \ne 0$. The hard barrier (or lack of hard barrier) against negative values may be considered either an advantage or disadvantage depending upon one's intentions.

\subsection{Computed Examples}\label{ss:entropyexamples}

For all examples where we solve \eqref{primalproblem}, we compute with $8$ moments ($n=8$), and we follow the lead of Borwein \& Lindstrom \cite{BL2016}, employing a Gaussian quadrature with 20 abscissas for the numerical integration necessary to solve the system~\eqref{optimalmultipliers}. One may consult Borwein \& Lindstrom \cite{BL2016} for an index on computation which explains a simple implementation with Newton's method. When reporting solutions for the weighted average $f_t$, instead of the case where $t=1$, we choose to plot the limiting case:
\begin{equation*}
\underset{t\rightarrow 1}{\lim}(f_t^*)' = \max \{\cdot, 0 \}.
\end{equation*}
The first reason for this is that the exact cases $t=1$ and $\lambda =1$ coincide (see Remark~\ref{rem:limitingcasesproximal}), and so comparing them is not as interesting. The second reason is to be consistent with the method of reporting employed in \cite{BL2016} (see Remark~\ref{rem:error}).

We compute with vertical translations of the function we wish to reconstruct, the function used by Borwein \& Lindstrom,
\begin{equation}\label{objectivefunction}
\rho: s \mapsto \frac{3}{5} + \frac{1}{2}\sin \left(3\pi s^2 \right),
\end{equation}
which we compute with in Example~\ref{ex:1}.

\begin{figure}
	\begin{center}
		\includegraphics[width=.4\textwidth]{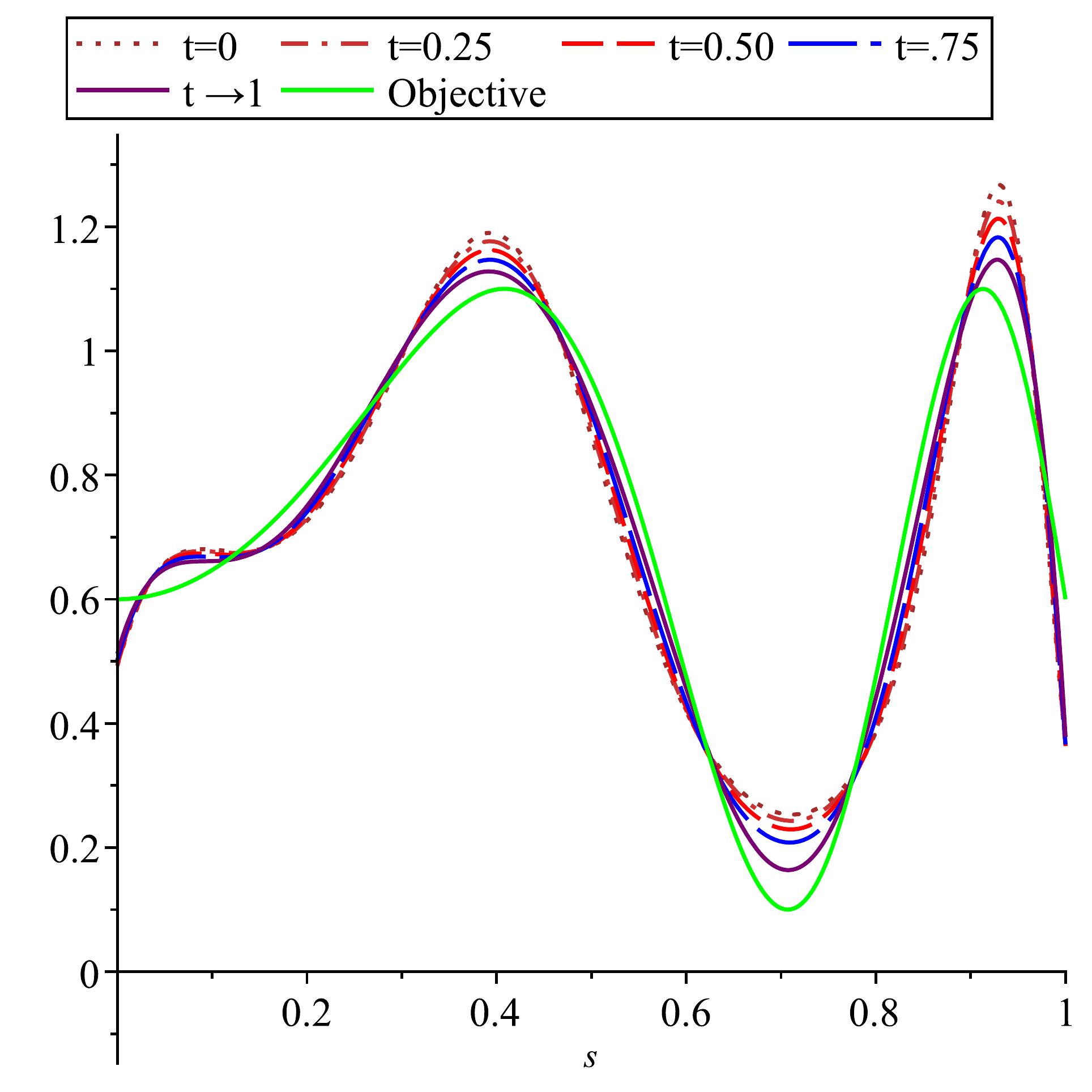}\quad \includegraphics[width=.4\textwidth]{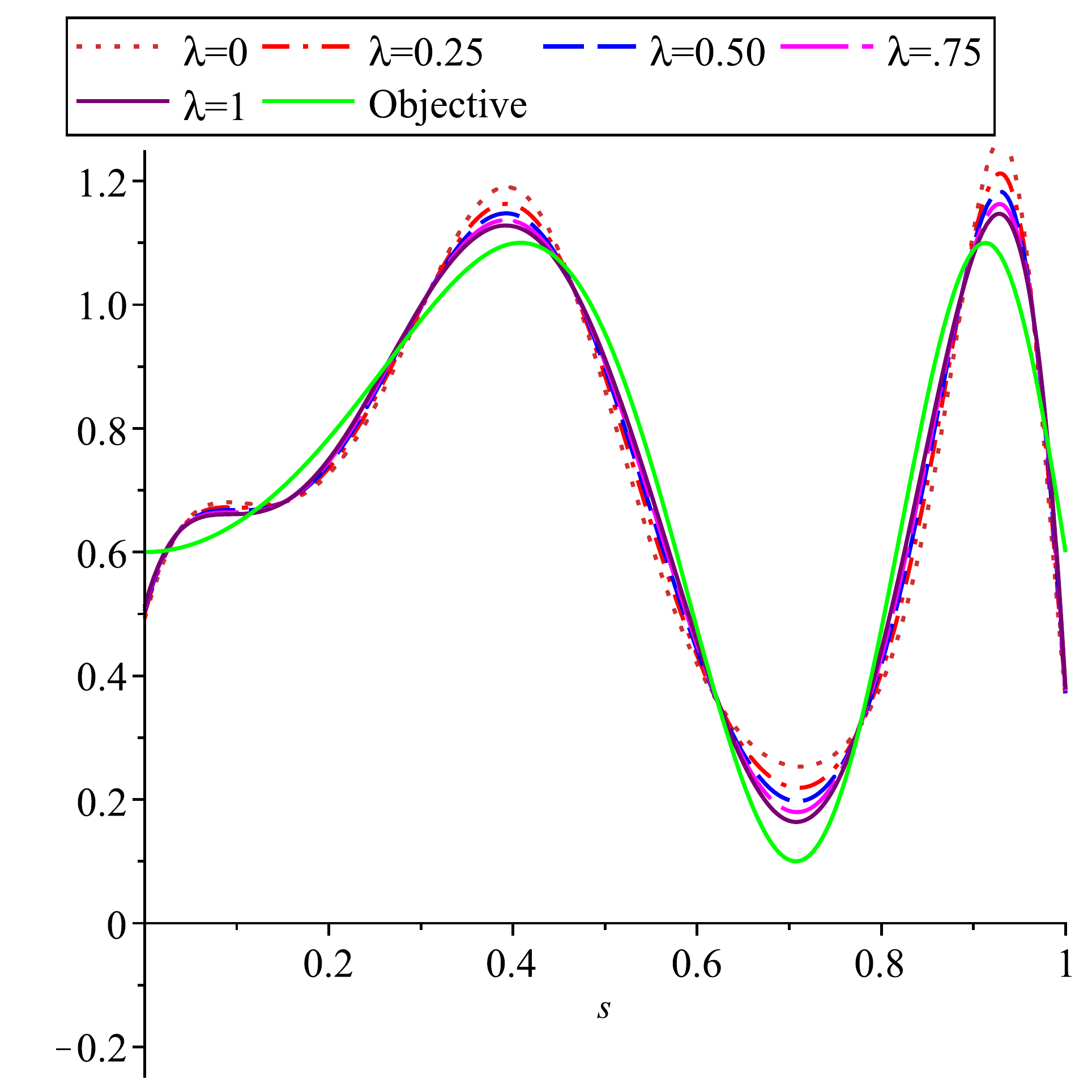}
	\end{center}
	\caption{Primal solutions from Example~\ref{ex:1} appear quite similar.}\label{fig:ex1}
\end{figure}

\begin{example}[Similarities between weighted average and proximal average]\label{ex:1}
Figure~\ref{fig:ex1} shows similar-looking primal solutions obtained by computing with the weighted average $f_t$ and the proximal average $f_\lambda$ where the objective function is as in \eqref{objectivefunction}. Importantly, in this case $\rho(s) \geq 0 \; \forall s \in [0,1]$.
\end{example}

The advantage of homotopy becomes apparent when the objective function has negative output values, as it does in the next example.

\begin{figure}
	\begin{center}
		\includegraphics[width=.4\textwidth]{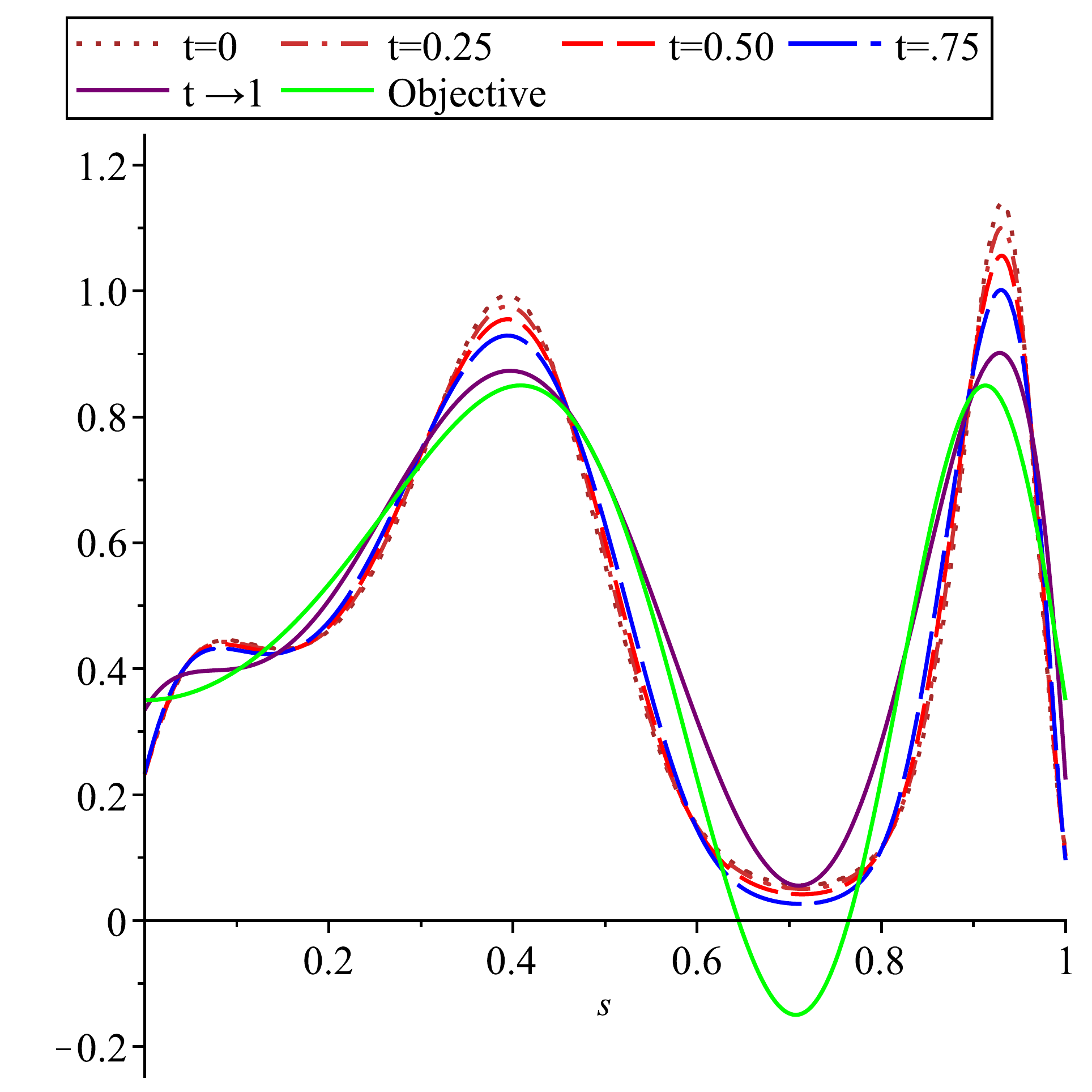}\quad \includegraphics[width=.4\textwidth]{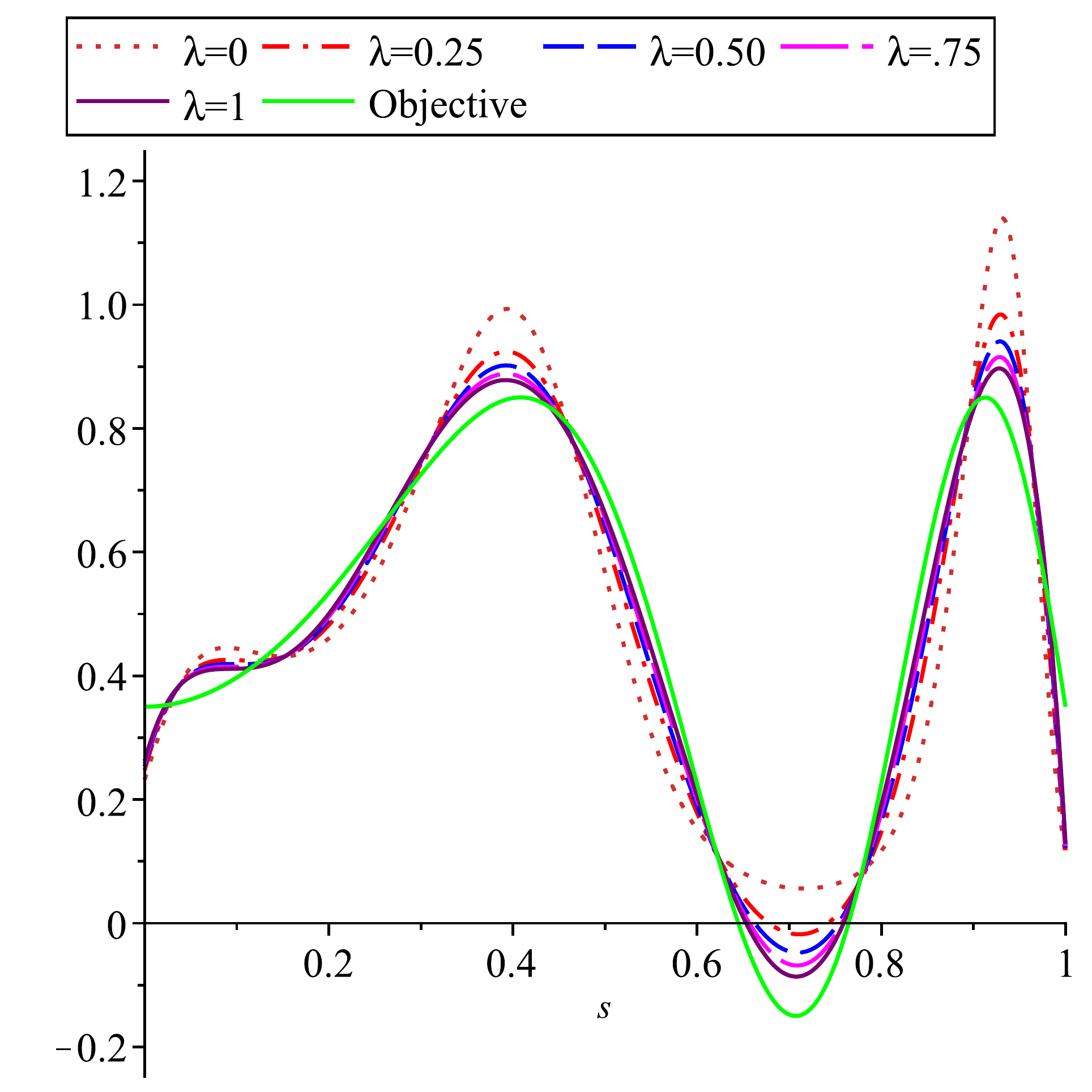}
	\end{center}
	\caption{Primal solutions from Example~\ref{ex:2} are noticeably different, particularly where the objective function is negative-valued.}\label{fig:ex2}
\end{figure}

\begin{example}[Differences between weighted average and proximal average]\label{ex:2}
	For the second example, we compute with a negative translation of the previous objective function:
	\begin{equation*}
	\rho: s \mapsto \frac{7}{20} + \frac{1}{2}\sin \left(3\pi s^2 \right).
	\end{equation*}
	Figure~\ref{fig:ex2} shows the primal solutions for the weighted average $f_t$ at left and for the proximal average $f_\lambda$ at right.

	The presence of negative values for the objective function $\rho$ illuminates an important advantage of the proximal average $f_\lambda$. Because $(f_\lambda^*)'$ is allowed to have negative range values for $\lambda > 0$ (as shown in Figure~\ref{fig:fstarprime}), the primal solutions in the proximal average case are able to have negative range values for $\lambda >0$. As a result, the primal solutions corresponding to the proximal average with $\lambda > 0$ are a better fit for our objective function $\rho$ than the primal solutions corresponding to the weighted average.
\end{example}

For the advantage of homotopy---that primal solutions may take on negative values when $\lambda \ne 0$---there is a price to pay computationally. Namely, in contradistinction with the case of the weighted average $f_t$, Newton's method no longer reliably solves the problem for the proximal average $f_\lambda$ when the objective function is permitted to take values below or near zero. This is shown in Example~\ref{ex:3}.

\begin{figure}
	\begin{center}
		\includegraphics[width=.4\textwidth]{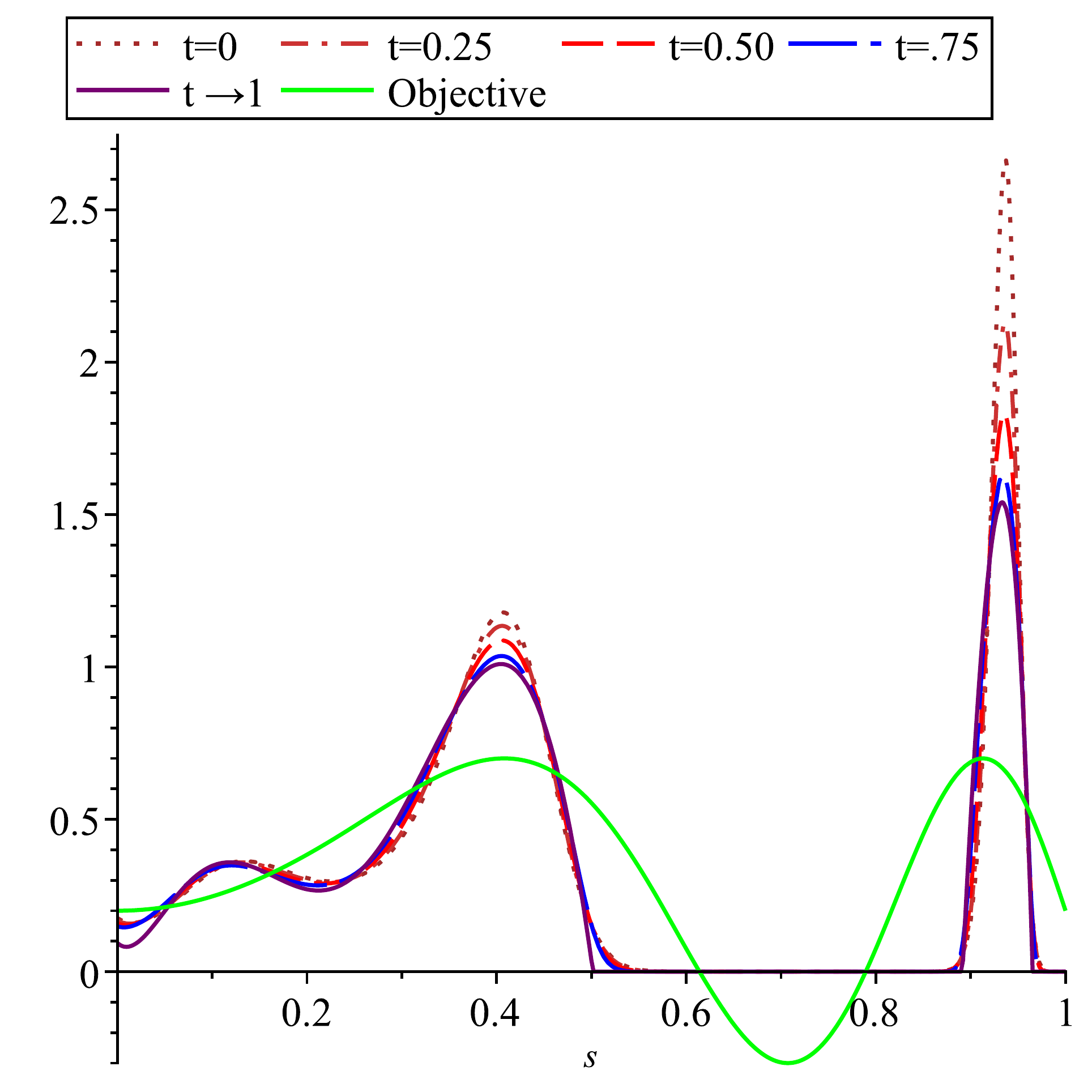}\quad \includegraphics[width=.4\textwidth]{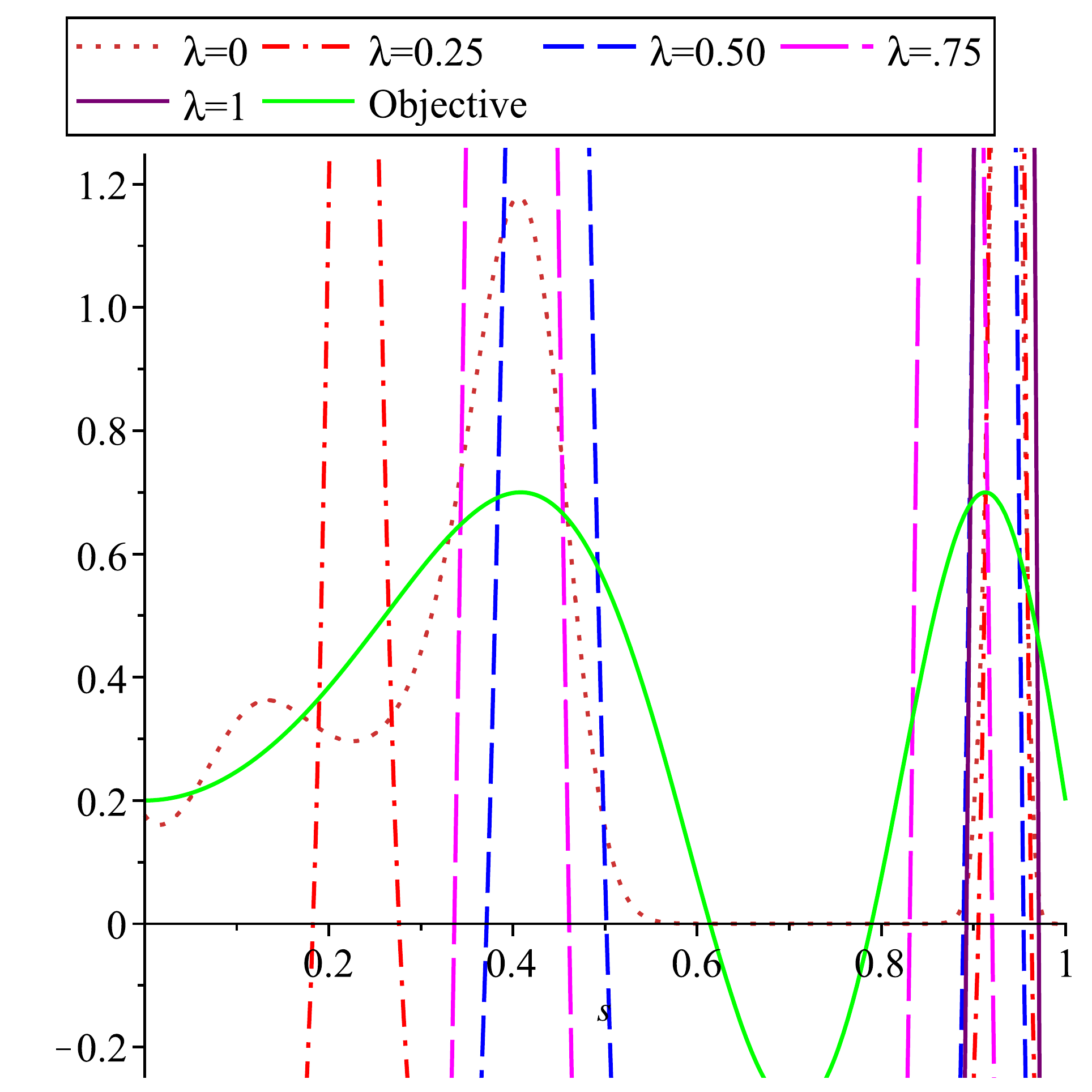}
	\end{center}
	\caption{Newton's method is less reliable for the proximal average in the case of Example~\ref{ex:3}.}\label{fig:ex3}
\end{figure}

\begin{example}[Computational challenge]\label{ex:3}
	To illustrate a computational disadvantage of homotopy, we compute with the data vector $b$ generated by
	\begin{equation*}
	\rho: s \mapsto \frac{1}{5} + \frac{1}{2}\sin \left(3\pi s^2 \right),
	\end{equation*}
	which is another downward translation of the function used to generate the data vector in Example~\ref{ex:2}. With the starting point of $(\frac{1}{2},\dots \frac{1}{2}) \in \mathbb{R}^8$, Newton's method fails to find the optimal solution for the homotopy $f_\lambda$ while it still manages to find the optimal solution for the weighted average $f_t$. Figure~\ref{fig:ex3} shows the primal solution for $f_t$ at left and the primal output when Newton's method is paused after 400 iterates for $f_\lambda$ at right.
\end{example}

Rather than using Newton's method, one might instead use gradient descent to solve the system~\eqref{optimalmultipliers}, either by seeking to
\begin{enumerate}[label=(\roman*)]
	\item \label{maximizethedual} solve the dual problem directly
	\item \label{sumofsquaresgradients} minimize the sum of the squares of the gradient components.
\end{enumerate}
\ref{maximizethedual}: In the former case, the gradient we use has the $n$ components
\begin{equation*}
\int_0^1 (f^*)' \left(\sum_{j=1}^n \mu_j a_j(s) \right) a_k(s) \d - b_k \quad (1\leq k \leq n),
\end{equation*}
which is, of course, the system from \ref{optimalmultipliers}. \\
\ref{sumofsquaresgradients}: In the latter case, the problem becomes:
\begin{align*}
\text{Find}\; &\mu \in \mathbb{R}^n \;\text{such that}\; \mathcal{G}(\mu):=\sum_{k=1}^{n} \mathcal{G}_k(\mu) = 0 \; \text{where} \\
 \mathcal{G}_k&: \mu \mapsto \left(\int_0^1 (f^*)' \left(\sum_{j=1}^n \mu_j a_j(s) \right) a_k(s) \d -b_k \right)^2, (1\leq k \leq n).
\end{align*}
Again using a Gaussian quadrature rule with $m$ abscissas $s_1,\dots,s_m$ and corresponding weights $w_1,\dots,w_m$, we let
\begin{equation*}
\sum_{i=1}^{m}w_i (f^*)' \left(\sum_{j=1}^n \mu_j a_j(s_i) \right) a_k(s_i) :\approx \int_0^1 (f^*)' \left(\sum_{j=1}^n \mu_j a_j(s) \right) a_k(s) \d
\end{equation*}
and so \eqref{optimalmultipliers} reduces to finding $\mu \in \mathbb{R}^n$ such that:
\begin{align}\label{gradientdescentproblem}
\mathcal{G}(\mu)=\sum_{k=1}^n \left(\left(\sum_{i=1}^{m}w_i (f^*)' \left(\sum_{j=1}^n \mu_j a_j(s_i) \right) a_k(s_i) \right) -b_k \right)^2 =0.
\end{align}
To solve \eqref{gradientdescentproblem}, we may use gradient descent where
\begin{equation*}
\nabla \mathcal{G}(\mu) = \left(\frac{\partial}{\partial \mu_1}\mathcal{G}(\mu),\dots,\frac{\partial}{\partial \mu_n}\mathcal{G}(\mu)\right).
\end{equation*}

\begin{example}[Gradient Descent]\label{ex:4}
	When we implement gradient descent for either of the above approaches with the same starting point and objective function from Example~\ref{ex:3}, the method tends to stall. Consequently, the primal solutions yielded do not correspond to the true solution for the problem and only roughly resemble the function $\rho$ used to generate the data. This is shown at right in Figure~\ref{fig:ex4}.
\end{example}

\begin{figure}
	\begin{center}
		\includegraphics[width=.4\textwidth]{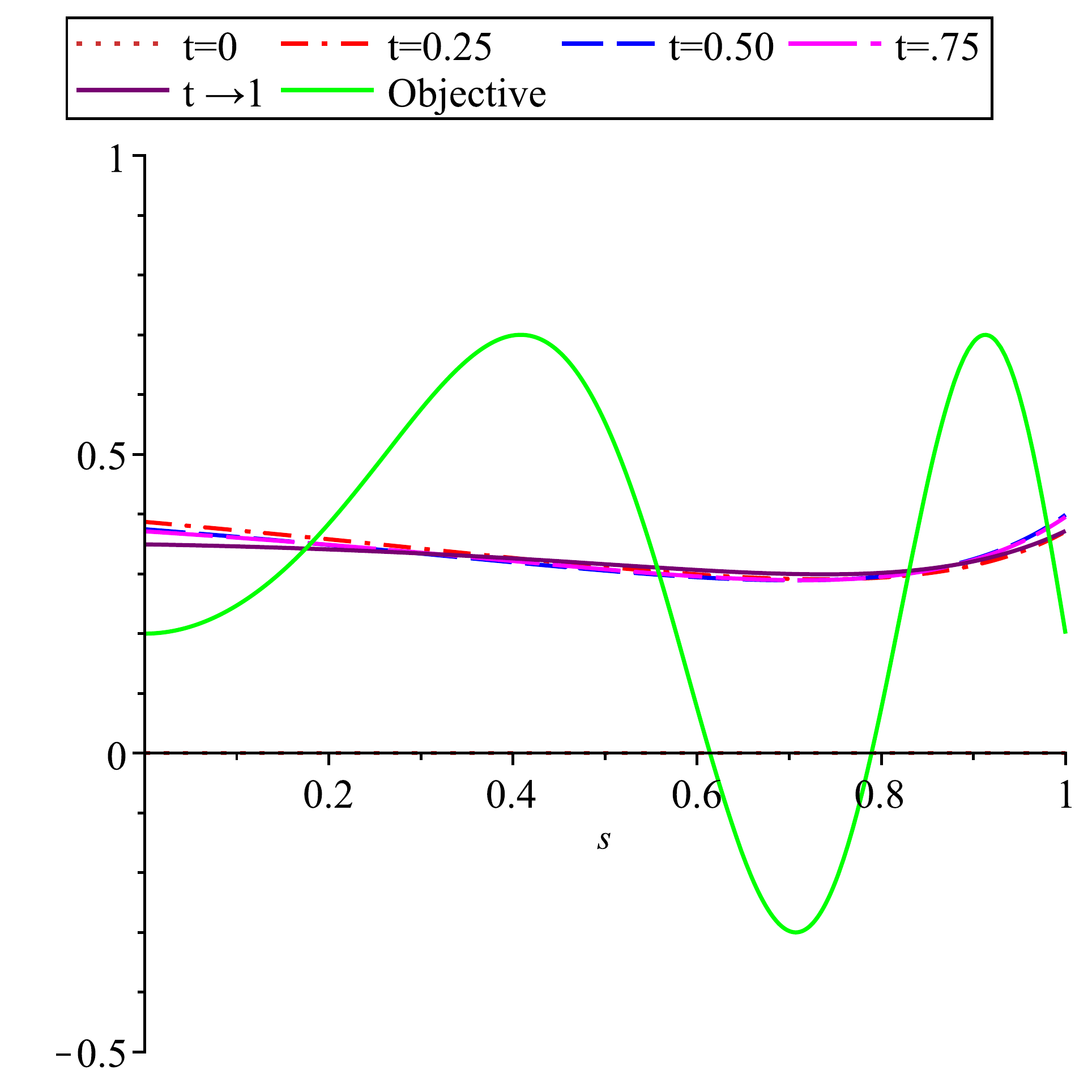}\quad \includegraphics[width=.4\textwidth]{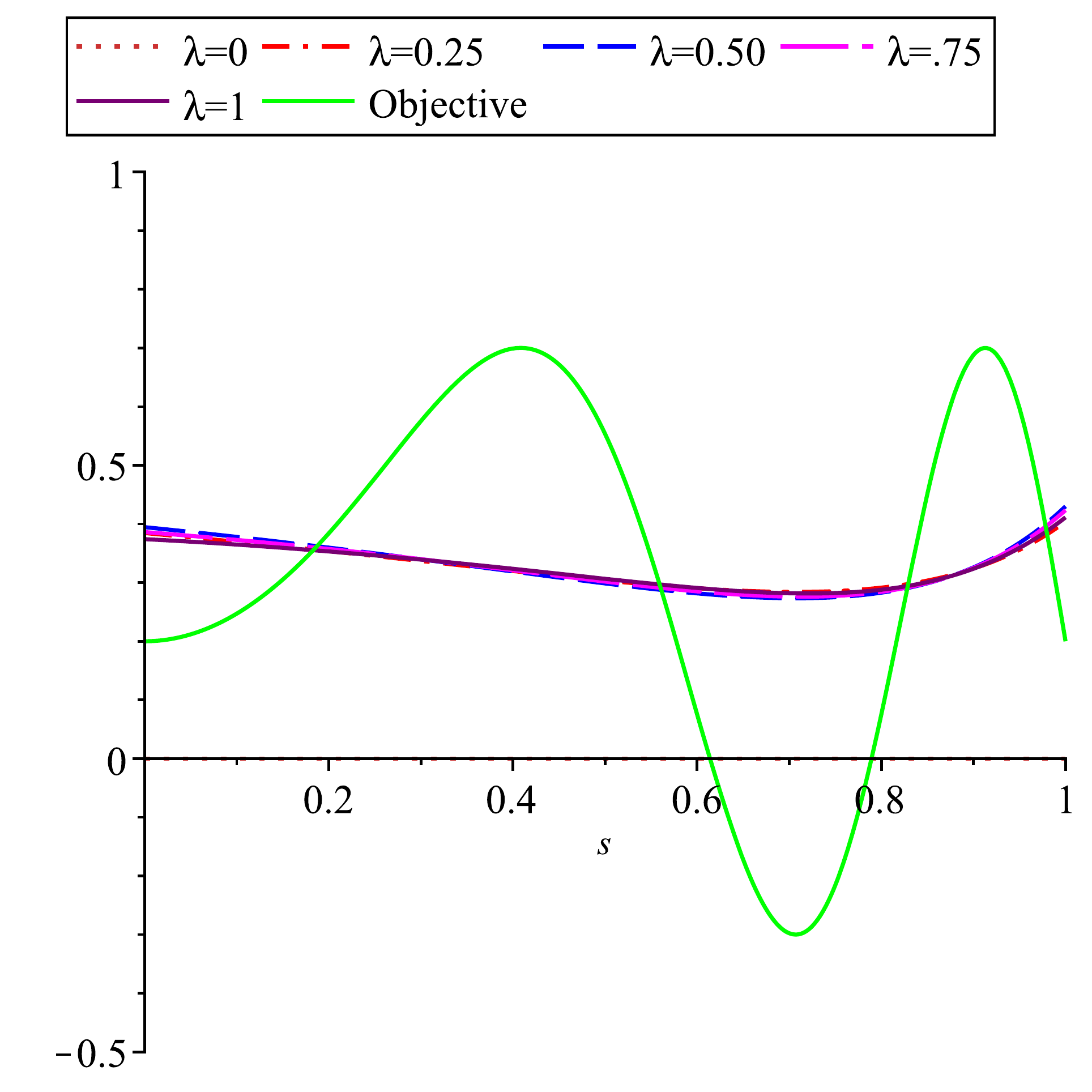}
	\end{center}
	\caption{Gradient descent may be insufficient as in Example~\ref{ex:4}.}\label{fig:ex4}
\end{figure}

\subsection{A homotopy method}

As a remedy for stalling, we may employ a homotopy-type whereby we solve a \emph{sequence of problems}. Suppose we seek a solution where the objective function is given by
\begin{equation*}
\rho : s \mapsto \frac{7}{20} + \frac{1}{2}\sin \left(3\pi s^2\right)- \Delta.
\end{equation*}
Then we let
\begin{equation*}
\rho_N : s \mapsto \frac{7}{20} + \frac{1}{2}\sin \left(3\pi s^2\right)-N \delta, \;\; N \in \{0,\dots,\upsilon\}\subset \mathbb{N}, \;\; \delta >0 , \;\; \upsilon \delta = \Delta.
\end{equation*}
We further define $\mu^N$ be the solution to \eqref{optimalmultipliers} for the problem corresponding to the linear constraint generated by the function $\rho_N$. We can find $\mu^0$ with Newton's method (and did so in Example~\ref{ex:2}). We may then using $\mu^0$ as our \emph{starting point} for solving the problem corresponding to objective function $\rho_1$. If we are successful, we may then use the solution, $\mu^1$, as our starting point for finding $\mu^2$. Continuing in this fashion we aim to solve a sequence of problems where the final problem corresponds to the function with which we are concerned. The solution, $\mu^\upsilon$, is the solution we seek.

This may be thought of as a homotopy method, in the sense that we solve a sequence of problems corresponding to a sequence of perturbed linear constraints $b^0,b^1,b^2,\dots$ where the solution corresponding to the constraint $b^0$ is known and the solution corresponding to the constraint $b^{\upsilon}$ is the one we seek. We illustrate in the following example.

\begin{example}[Solving a sequence of minimization problems]\label{ex:5}
	We desire to solve for the constraint $Ax=b=A\rho$ with
	\begin{equation*}
	\rho: s \mapsto \frac{7}{20}+\frac{1}{2}\sin \left(3\pi s^2 \right) - \frac{3}{10} = \frac{1}{20}+\frac{1}{2}\sin \left(3\pi s^2 \right)
	\end{equation*}
	Then, letting $\delta = \frac{1}{10}$, and $\upsilon = 3$, we may consider the sequence of problems corresponding to the objective functions given by
	\begin{equation*}
	\rho_N : s \mapsto \frac{7}{20} + \frac{1}{2}\sin \left(3\pi s^2\right)-N\frac{1}{10}, \;\; N \in \{0,..,3\}
	\end{equation*}
	We computed $\mu^0$ using Newton's method in Example~\ref{ex:2}. Using gradient descent with a step size modifier of $\frac{1}{10}$ and taking $\mu^0$ as our starting value, we obtain $\mu^1$. In the same way, we use $\mu^1$ to find $\mu^2$; finally we use $\mu^2$ to find $\mu^3$. $\mu^3$ is the solution we seek for the minimization problem induced by the objective function $\rho$. The corresponding primal values for various $\lambda$ are shown at in Figure~\ref{fig:ex5lambda}.

	Notice that the translated generating function $\rho$ used to generate the linear constraint in Example~\ref{ex:5} has actually been translated even further than the version used to generate the linear constraint in both Examples~\ref{ex:3} and \ref{ex:4}. This homotopy method appears to also solve the proximal version of the problem from Examples~\ref{ex:3} and \ref{ex:4}.

	\begin{figure}
		\begin{center}
			\includegraphics[width=.4\textwidth]{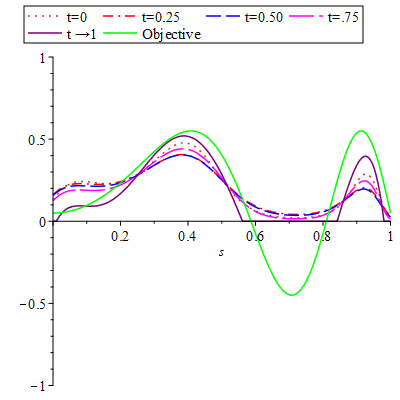}
			\includegraphics[width=.4\textwidth]{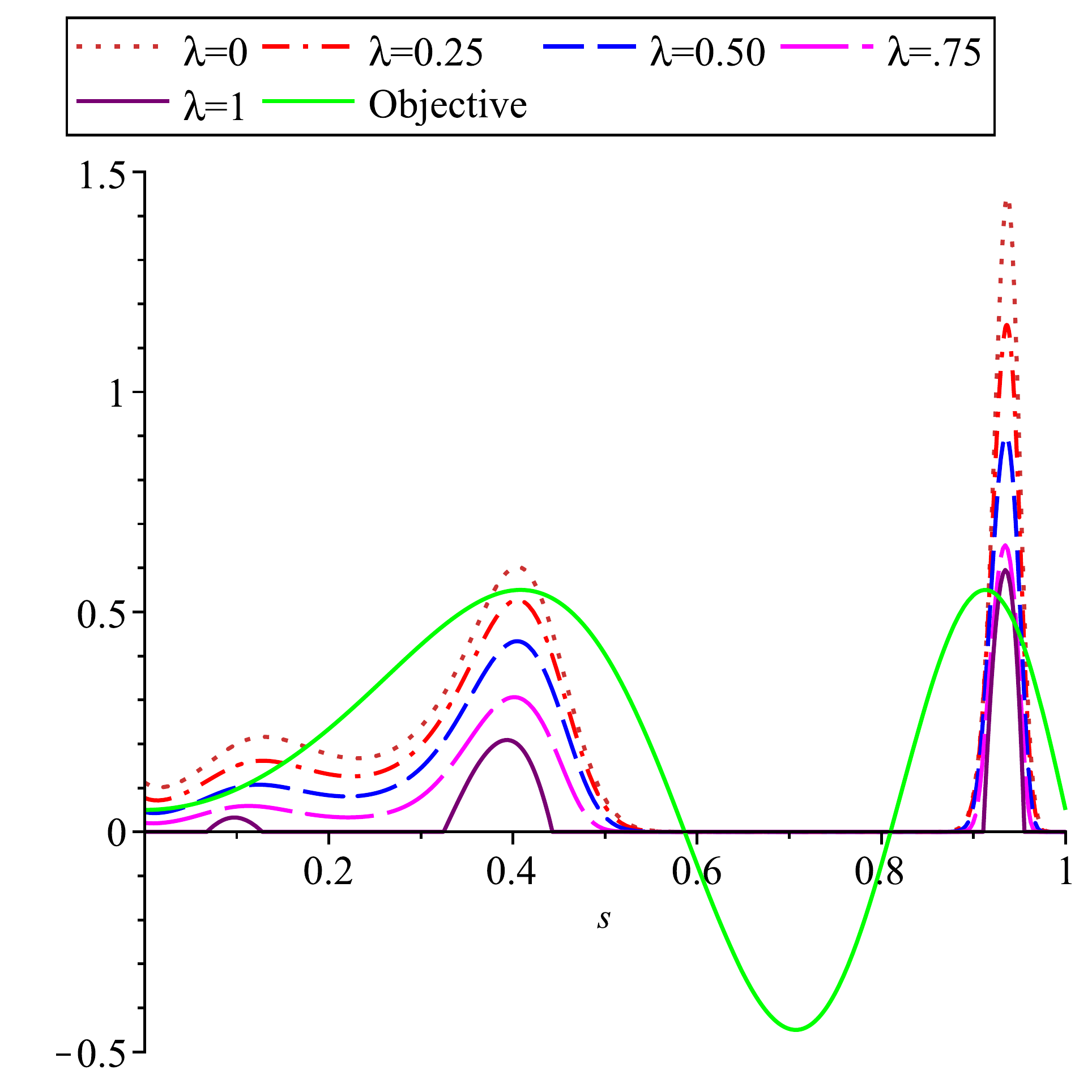}
		\end{center}
		\caption{Primal values obtained by computing with weighted average in Example~\ref{ex:5}.}\label{fig:ex5t}
	\end{figure}

	For comparison, we show the resultant primal values obtained by computing with the weighted average in Figure~\ref{fig:ex5t}. At left we computed with the function $G(\mu)$, and at right we attacked the dual problem directly. The solutions for $f_t$ left in are distinctly different from those at right, which more closely resemble the weighted average solutions in Figure~\ref{fig:ex3} from Example~\ref{ex:3}. In the table below we compare the errors from the linear constraint where $x_t$ is the primal solution obtained by computing with $f_t$. For Example~\ref{ex:3} we used Newton's method. For Example~\ref{ex:5}, we computed $5$ iterates for the first subproblems and $100$ iterates for the final subproblem. When working with $G(\mu)$, we used a gradient descent step size modifier of $1/10$; when attacking the dual problem directly, we used a size of $1$.\\
	\begin{center}
	\begin{tabular}{p{2cm} p{2.5cm} p{2.5cm} p{2.5cm}}
	& Example~\ref{ex:3} & Example~\ref{ex:5} & Example~\ref{ex:5}\\
	& Newton's & $G(\mu)$ & Dual direct\\
	$t$ value & $\|Ax_t -b\|$ & $\|Ax_t -b\|$ & $\|Ax_t -b\|$\\ \hline
	$0$ & $7.46${\scriptsize E}$-11$ & $3.82${\scriptsize E}$-2$ & $7.91${\scriptsize E}$-3$\\
	$0.25$ & $6.26${\scriptsize E}$-11$ & $3.61${\scriptsize E}$-2$  & $3.37${\scriptsize E}$-2$\\
	$0.5$ & $2.09${\scriptsize E}$-10$ & $3.55${\scriptsize E}$-2$ & $7.33${\scriptsize E}$-2$\\
	$0.75$ & $4.42${\scriptsize E}$-10$ & $3.24${\scriptsize E}$-2$ & $1.25${\scriptsize E}$-1$\\
	$\rightarrow 1$ & $2.61${\scriptsize E}$-3$ & $2.04${\scriptsize E}$-2$ & $1.72${\scriptsize E}$-1$
	\end{tabular}
\end{center}
	Computing with the weighted average, we have solutions which do a poorer job of satisfying the linear constraint than in Example~\ref{ex:3}, where $\rho$ has been translated downward by a smaller amount. While the observations we will make about the proximal case below suggest that a better satisfaction of the linear constraint may be possible if we continue to run more iterates, it is also likely that we have reached the limitations of what data the weighted average can be successfully used for. The reasons are as follows.

	Since $\rho$ returns negative values, $\rho \notin {\rm dom}(I_{f_t})$. For this reason, it is difficult to verify whether or not the conditions for strong duality hold unless we can find some other $x \in {\rm dom}I_{f_t}$ such that $Ax=b$ (for example, our numerically obtained solutions for $t<1$ from Example~\ref{ex:3}).

	In fact, $\rho$ may have been translated so far downward that it may no longer be possible to satisfy the linear constraint. This occurs if there does not exist an $x \in {\rm dom}(I_{f_t})$ such that $Ax=b$. Since $b$ still lies in the positive orthant, it is difficult to verify whether this has occurred for the present example. However, further translations downward will eventually yield a data vector $b$ which does not lie in the non-negative orthant. Since the monomials $a_1(s),\dots,a_n(s)$ are non-negative on $[0,1]$, $Ax$ may only lie outside of the non-negative orthant if $x(s)$ takes on negative values in $[0,1]$. However, such an $x(s)$ is not in the domain of $I_{f_t}$ unless $t=1$. In such a case, the linear constraint cannot possibly be satisfied.

	In other words, if $\rho(s) \in L^1([0,1])$ is non-negative, the constraint definitely can be satisfied (indeed, it is satisfied by $\rho$). If $A\rho$ lies outside of the non-negative orthant, the constraint definitely cannot be satisfied. If $\rho(s)$ takes on negative values in $[0,1]$ but $A\rho$ is still in the positive orthant, determining whether or not the linear constraint can be satisfied may be more difficult.

	From a numerical standpoint, we may attempt to check by taking the linear system $Mx=b$ --- where $M$ is the $({\rm \# moments})\times ({\rm \# abscissas})$ matrix representing a discretization of $A$, where cell $i$ in a row $j$ consists of the $i$th weight multiplied by the values of $a_j$ evaluated at the $i$th abscissas --- for $x$ with the requirement that $x$ lie in the positive orthant. Decreasing the number of abscissas to match the number of moments eliminates free variables, although we pay the price of having possibly eliminated some feasible solutions (solutions lying in the positive orthant). With $8$ moments and $8$ abscissas, the unique solution $x$ for Example~\ref{ex:2} lies in the positive orthant. For Example~\ref{ex:3}, $x$ lies just outside of the positive orthant, but the unit precision distance we obtained from the linear constraint with Newton's method indicates that by increasing the number of moments to $20$ we have recovered a feasible solution. For Example~\ref{ex:5} with $8$ moments and $8$ abscissas, our uniquely determined $x$ lies twice as far from the positive orthant. Thus we have a certificate of feasibility for Example~\ref{ex:3}, experimental evidence of feasibility for Example~\ref{ex:4}, and reasonable doubt that feasibility is possible for Example~\ref{ex:5}.
	
	\begin{figure}
		\begin{center}
			\includegraphics[width=.4\textwidth]{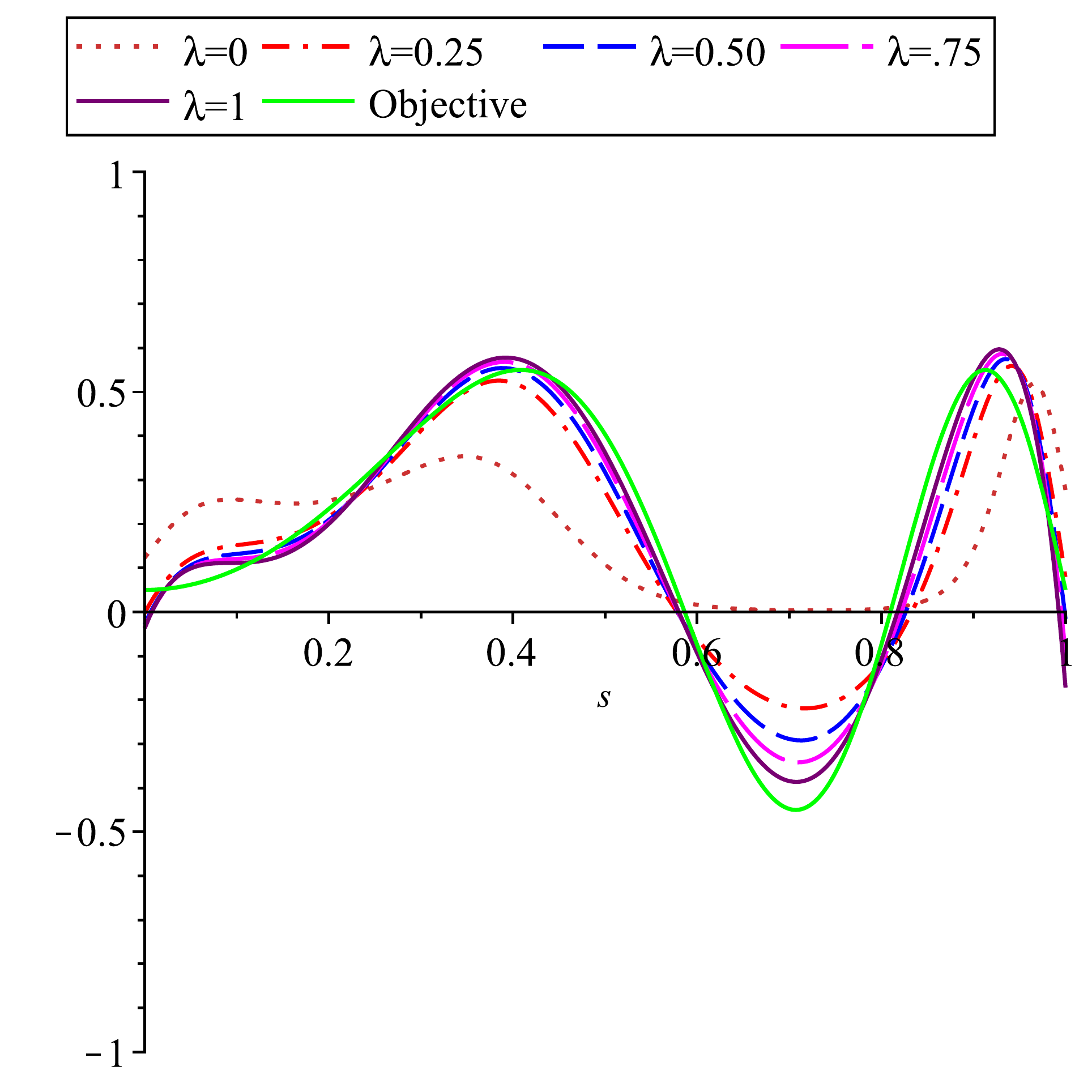}
		\end{center}
		\caption{Example~\ref{ex:5} demonstrates that solving a sequence of problems makes more solutions accessible.}\label{fig:ex5lambda}
	\end{figure}

	The proximal average, by contrast, does not entail such theoretical problems. After running the first subproblems to $5$ iterates, with a gradient descent step size modifier of $1/10$ for minimizing $G(\mu)$ we record the errors from the linear constraint after varying numbers of iterates for the final subproblem as follows.
	\begin{center}
		\begin{tabular}{p{2cm} p{2.5cm} p{2.5cm} p{2.5cm}}
		 & $100$ iterates  & $1100$ iterates & $2100$ iterates\\
			 $\lambda$ value & $\|Ax_\lambda -b\|$ & $\|Ax_\lambda -b\|$ & $\|Ax_\lambda -b\|$ \\ \hline
			$0$ & $3.82${\scriptsize E}$-2$ & $1.96${\scriptsize E}$-2$ & $1.34${\scriptsize E}$-2$ \\
			$0.25$ & $2.97${\scriptsize E}$-2$ & $4.43${\scriptsize E}$-3$ & $4.35${\scriptsize E}$-3$ \\
			$0.5$ & $1.85${\scriptsize E}$-2$ & $1.77${\scriptsize E}$-3$ & $1.72${\scriptsize E}$-3$ \\
			$0.75$ & $1.12${\scriptsize E}$-2$ & $6.75${\scriptsize E}$-4$ & $6.53${\scriptsize E}$-4$ \\
			$1$ & $8.85${\scriptsize E}$-3$ & $1.90${\scriptsize E}$-3$ & $1.66${\scriptsize E}$-3$ 
		\end{tabular}
	\end{center}
	For $\lambda > 0$, conditions for strong duality are still satisfied by $\rho$, and the problem is still feasible. This, combined with the apparent visual fit for $\lambda=0.25,0.5,0.75,1$, suggests that the homotopy method is working, albeit slowly.

	When we attack the dual problem directly, the performance improves. We find that we are able to obtain solutions with only two subproblems, solving first with $N=0$ and then with $N=3$. After solving the $N=0$ case with Newton's method, we record the errors from the linear constraint after varying numbers of iterates of gradient descent (with no step size modification) for the second subproblem as follows.
	\begin{center}
		\begin{tabular}{p{2cm} p{2.5cm} p{2.5cm} p{2.5cm}}
			& $100$ iterates  & $1100$ iterates & $2100$ iterates \\
			$\lambda$ value & $\|Ax_\lambda -b\|$ & $\|Ax_\lambda -b\|$ & $\|Ax_\lambda -b\|$ \\ \hline
			 $0$ & $9.12${\scriptsize E}$-3$ & $4.01${\scriptsize E}$-3$ & $3.37${\scriptsize E}$-3$\\
			 $0.25$ & $2.35${\scriptsize E}$-3$ & $9.95${\scriptsize E}$-4$ & $5.59${\scriptsize E}$-4$\\
			 $0.5$ & $1.07${\scriptsize E}$-3$ & $4.00${\scriptsize E}$-4$ & $2.03${\scriptsize E}$-4$\\
			 $0.75$ &$4.79${\scriptsize E}$-4$ & $1.49${\scriptsize E}$-4$ & $7.89${\scriptsize E}$-5$\\
			 $1$ &$1.57${\scriptsize E}$-4$ & $7.58${\scriptsize E}$-6$ & $1.77${\scriptsize E}$-6$
		\end{tabular}
	\end{center}

	The apparent necessity of homotopy methods when computing with proximal averages when $\rho$ returns lower negative values, particularly for $\lambda$ nearer to $0$, may be related to the penalty for negative values becoming more and more extreme as $\lambda \rightarrow 0$, finally achieving a hard barrier at $\lambda =0$.
\end{example}

\section{Conclusion}\label{s:conclusion}

In this paper, we have catalogued advantages and disadvantages of computing with entropy functionals constructed from proximal averages instead of weighted averages. The weighted average affords ease of computation with hard barriers, but fewer problems may be solvable. In contrast, the proximal average allows us to choose graphically a choice from the net of primal solutions which may afford a better visual fit by being flexible with the enforcement of the barrier. We have explained from a theoretical standpoint why this is the case, and have illustrated it in practice with our examples, giving special attention to the computational challenges one may encounter when working with steep penalties. We have also shown how the Lambert $\W$ function is instrumental in both the weighted averages and proximal averages. In so doing, we have shown how the human-machine collaboration so frequently championed by Borwein may be used to compute hard proximal averages.

We suggest several possibilities for continued investigation.

\begin{enumerate}[label=(\roman*)]
\item It is natural to consider also proximal averages employing for the Fermi--Dirac entropy, which admits hard barriers on \emph{both} sides of a closed interval $[0,1]$. The net produced by the proximal average of the Fermi--Dirac entropy with the Boltzmann--Shannon entropy should admit a hard barrier against negative numbers and a flexible barrier against numbers greater than $1$. One could also consider the net produced by the Fermi--Dirac entropy with the energy.

\item One may also consider the proximal average of two log barriers with empty intersection of their domains.

\item It is quite natural to investigate the case where one replaces the energy (as the proximal term in the construction of the proximal average) with another supercoercive function.

\item Another natural question is: what might we say about the epigraphs of the net of primal solutions for the entropy minimization problem when proximal averages are employed? May we obtain results on some form of continuous transformation of the primal solutions?
\end{enumerate}

Such investigations are likely to prove interesting, and will almost certainly demand the use of similar human-machine collaboration techniques. This present work is a step in that direction and is a natural template for such future investigation. We conclude by noting that the visualization of the entire family $f_\lambda$ of functions admitted by the proximal average illustrate epi-continuity in a beautiful and natural way.

\subsection*{Dedication}

This paper is dedicated to the fond memory of Jonathan M. Borwein, our adviser, mentor, and friend. Jon's guiding philosophy and inspiration underpin not only this work, but so much of everything we do --- and who we are are --- as mathematicians.

\end{document}